\documentclass[oneside]{amsart}

\usepackage[letterpaper,body={13.6cm,22.5cm}, mag=1000]{geometry}
\usepackage{amssymb, amsthm, amscd, amsmath}

\begin{document}

\theoremstyle{plain}
\newtheorem{cor}{Corollary}
\newtheorem{example}{Example}
\newtheorem{lemma}{Lemma}
\newtheorem{prob}{Problem}
\newtheorem{prop}{Proposition}
\newtheorem{theorem}{Theorem}

\newcommand{\Inn}{\operatorname{Inn} }
\newcommand{\Aut}{\operatorname{Aut} }
\newcommand{\Out}{\operatorname{Out} }
\newcommand{\Z}{\operatorname{Z} }
\newcommand{\annd}{\quad \text{ and } \quad}
\newtheorem{problem}{Problem}
\newcommand{\C}{\operatorname{C} }

\title[CLASS PRESERVING AUTOMORPHISMS]{CLASS PRESERVING AUTOMORPHISMS OF  UNITRIANGULAR GROUPS}

\author{VALERIY BARDAKOV}
\address{Sobolev Institute of Mathematics, pr. ak. Koptyuga 4, Novosibirsk, 630090, Russia}
\email{bardakov@math.nsc.ru}

\author{Andrei VESNIN}
\address{Sobolev Institute of Mathematics, pr. ak. Koptyuga 4, Novosibirsk, 630090, Russia}
\email{vesnin@math.nsc.ru}

\author{MANOJ K.  YADAV}
\address{School of Mathematics, Harish-Chandra Research Institute,\\ Chhatnag Road, Jhunsi, Allahabad - 211 019, India}
\email{myadav@hri.res.in}

\begin{abstract}
Let $\textrm{UT}_n (K)$ be a unitriangular group over a field $K$
and $\Gamma_{n,k} := \textrm{UT}_n (K)/$ $ \gamma_k(\textrm{UT}_n
(K))$, where $\gamma_k \left( \mathrm{UT}_n(K)\right)$ denotes the
$k$-th term of the lower central series of $\mathrm{UT}_n (K)$, $2
\le k \le n$. We prove that the group of all class preserving
automorphisms of $\Gamma_{n,k}$ is equal to $\Inn(\Gamma_{n,k})$ if
and only if $K$ is a prime field. Let $G_n^{(m)} := \mathrm{UT}_n
(\mathbb{F}_{p^m}) / $ $\gamma_3 \left(
\mathrm{UT}_n(\mathbb{F}_{p^m}) \right)$. We calculate the group of
all class preserving automorphisms and class preserving outer
automorphisms of $G_n^{(m)}$.
\end{abstract}

\maketitle

\section{Introduction}

Let $G$ be an arbitrary group. An automorphism $\alpha$ of $G$ is called \emph{class preserving} if $\alpha(x) \in x^G$ for all $x \in G$, where $x^G$ denotes the conjugacy class of $x$ in $G$. The set of all class preserving automorphisms of $G$, which we denote here by $\Aut_c(G)$, is a normal subgroup of $\Aut(G)$, the group of all automorphisms of $G$. Notice that $\Inn(G)$, the group of all inner automorphisms of $G$, is a normal subgroup of $\Aut_c(G)$. An automorphism $\beta$ of $G$ is called \emph{normal} if $\beta(N) \subseteq N$ for all normal subgroups $N$ of $G$. The set of all normal automorphisms of $G$, which we denote here by $\Aut_n(G)$, is a normal subgroup of $\Aut(G)$. Since every normal subgroup of a group $G$ can be written as a union of certain conjugacy classes in the group $G$, it follows that every class preserving automorphism of $G$ is a normal automorphism of $G$. Thus we get the following sequence of normal subgroups of $\Aut(G)$
$$\Inn(G) \unlhd \Aut_c(G) \unlhd \Aut_n(G) \unlhd \Aut(G).$$

The group of normal automorphisms for a given group $G$ is a well studied object. See \cite{RB, En1, F-G, J,  JR,  Lub, Lue, Lul, Ne-2,  Ro, R, Rom}. We'll come back to this later in the last section.

On the other hand $\Aut_c(G)$, the group of all class preserving automorphisms of a given group $G$, is a very less studied object. It seems that W. Burnside is the first person who talked about class preserving automorphisms and posed the following question in  1911 \cite[p. 463]{B}: \emph{Does there exist any finite group $G$ such that $G$ has  a non-inner class preserving automorphism?} In 1913, Burnside himself gave an affirmative answer to this question \cite{B1}. He constructed a group $G$ of order $p^6$, $p$ an odd prime,  isomorphic to the group
$$
\mathrm{UT}_3(\mathbb{F}_{p^2})  = \left\{
\left(
\begin{array}{ccc}
  1 & 0 & 0 \\
x & 1 & 0 \\
z & y & 1 \\
\end{array}
\right)~\left| \right.~x, y, z \in \mathbb{F}_{p^2}
\right\},
$$
where $\mathbb{F}_{p^2}$ is the field consisting of $p^2$ elements. He proved that $\Aut_c(G)$ is an elementary abelian $p$-group of order $p^8$, but $|\Inn(G)| = p^4$.

In 1947 G.E.~Wall \cite{gW47} constructed examples of finite groups $G$ such that \linebreak $\Inn(G) < \Aut_c(G)$. In 1966, C.H.~Sah \cite{cS68} studied the factor group \linebreak $\Out_{c}(G) := \Aut_c(G) / \Inn(G)$ for an arbitrary finite groups $G$, using cohomological techniques. After this, $\Aut_c(G)$ was studied by M.~Hertweck \cite{mH01} in 2001 for a special class of groups $G$ and by the third author \cite{mY06} in 2007 for finite $p$-groups $G$. In 2005, G.~Endimioni \cite{En2} proved that $\Out_{c}(G) = 1$ for all free nilpotent groups $G$.

From this point onwards we'll mainly talk about unitriangular groups. Let $\mathrm{UT}_n(K)$ denote the unitriangular group consisting of all $n \times n$ unitriangular matrices having entries in a field $K$, where $n \ge 2$. Let $\Gamma_{n,k}$ denote the factor group $\mathrm{UT}_n(K)/\gamma_k(\mathrm{UT}_n(K)$, where $1 \le k \le n$. Obviously $\Gamma_{2,k}$, $\Gamma_{n,1}$ and $\Gamma_{n,2}$ are abelian and therefore all class preserving automorphsms of these groups are inner.

Since the first example of a group $G$ such that $\Inn(G) < \Aut_c(G)$ comes from unitriagular groups, it is quite natural to pose the following problems:

\medskip

\noindent {\bf Problem A.} Find necessary and sufficient conditions on $\Gamma_{n,k}$, $n \geq 3$, $3 \leq k \leq n$,  such that $\Aut_c(\Gamma_{n,k}) = \Inn(\Gamma_{n,k})$.

\medskip

\noindent {\bf Problem B.} Let $G = \mathrm{UT}_n(K)$ be such that $\Inn(G) < \Aut_c(G)$. Study the stucture of $\Aut_c(G)$.

\medskip

In this paper we study these problems.  We provide a solution to Problem A in the following theorem.

\medskip

\noindent {\bf Theorem A.}
Let $\Gamma_{n,k} = \mathrm{UT}_n(K)/\gamma_k(\mathrm{UT}_n(K))$,  where $K$ is a field,  $n \ge 3$ and $3 \le k \le n$. Then  $\Aut_c(\Gamma_{n,k}) = \Inn(\Gamma_{n,k})$ if and only if $K$ is a prime field.

\medskip

The statement follows from Theorems \ref{theorem-new} and
\ref{theorem2.7}, which we prove in Section $2$.  Alternative proof
of Theorem \ref{theorem-new}  based on the ideas suggested by
N. S.~Romanovskii is given in Appendix A.

On the lines of the proof of Theorem \ref{theorem-new}, it follows that $\Aut_c (\mathrm{UT}_n(\mathbb{Z})) = $ $\Inn (\mathrm{UT}_n (\mathbb{Z}))$ for every positive integer $n$, where $\mathbb{Z}$ denotes the ring of integers.

Next we study Problem B for the following special case. For integers $n \ge 3$, $m \ge 1$ and a prime $p$, let $\mathrm{UT}_n(\mathbb{F}_{p^m})$ denote the group of all $n \times n$ unitriangular matrices with entries in the field $\mathbb{F}_{p^m}$. Set $G_n^{(m)} := \mathrm{UT}_n(\mathbb{F}_{p^m})/\gamma_3(\mathrm{UT}_n(\mathbb{F}_{p^m}))$, where $\gamma_3 \left( \mathrm{UT}_n(\mathbb{F}_{p^m})\right)$ denotes the third term of the lower central series of $\mathrm{UT}_n(\mathbb{F}_{p^m})$. In the following theorem, which we prove in Section $3$ as Theorem 3.2, we calculate the group $\Aut_c(G_n^{(m)})$ and notice (surprisingly) that  $|\Aut_c(G_n^{(m)})/\Inn(G_n^{(m)})|$ is independent of $n$.

\medskip

\noindent {\bf Theorem B.}
Let $G_n^{(m)}$ be the group defined in the preceding paragraph. Then $\Aut_c(G_n^{(m)})$ is elementary abelian $p$-group of order $p^{2m^2+mn-3m}$. Moreover the order of $\Out_c(G_n^{(m)})$ is $p^{2m(m-1)}$, which is independent of $n$.

\medskip

We would like to remark that the  group of all automorphisms of $\mathrm{UT}_n(K)$ was studied by Levchuk~\cite{Lev}.

In Section 4 we give a quick survey on normal automorphisms of groups (defined above) and pose several interesting problems in the sequel.

Before concluding this section we give some definitions, which we'll use many times in the paper. Let $G$ be a group with a fixed set of generators $\{x_1, \cdots, x_d\}$ (say). An automorphism $\varphi$ is called \emph{basis conjugating} if $\varphi(x_i) \in x_i^G$, for all $i$ such that $1 \le i \le d$. Let $\mathrm{Cb}(G)$ denote the group of all basis conjugating  automorphisms of $G$. An automorphism of a group $G$ is said to be \emph{central} if it induces identity on $G/\Z(G)$, where $\Z(G)$ denotes the center of $G$.  We write the subgroups in the lower central series of $G$ as $\gamma_n(G)$, where $n$ runs over all strictly positive integers. They are defined inductively by
$$\gamma_1(G) = G \annd \gamma_{n+1}(G) = [\gamma_n(G), G] $$
for any integer $n \ge 1$.

\section{Unitriangular groups over fields}

In this section we consider the group $\mathrm{UT}_n(K)$ of lower-unitriangular matrices over a field or ring $K$. It is well-known that this group is generated by elementary transvections $t_{ik}(\lambda)$, where $1 \leq k < i \leq n$ and $\lambda \in K \setminus \{ 0 \}$, which satisfy the following relations (see \cite[\S~3]{KM}):
$$
[t_{ik}(\alpha), t_{mj}(\beta)]  = \left\{
\begin{array}{ll}
t_{ij}(\alpha \beta), & ~\mathrm{if}~ k = m,  \\
t_{mk}(-\alpha \beta), & ~\mathrm{if}~ i=j, \\
{\mathbf 1_n}, & ~\mathrm{if}~ k \not= m~\mathrm{and}~ i \not= j,  \\
\end{array}
\right.
$$
where $2 \le i, m \le n$ and $ 1 \le k, j \le n-1$.   Here $[a, b] = a^{-1} b^{-1} a b$ and ${\mathbf 1_n}$ is the unity matrix of degree $n$.

Notice that the group $\mathrm{UT}_n (K)$ is generated by $t_{i+1, i}(K)$ for $1 \le i \le n-1$, where  $t_{i+1, i}(K)$ is the subgroup of $\mathrm{UT}_n (K)$ generated by $\{t_{i+1, i}(\alpha) \mid \alpha \in K\}$. Thus
$$
\mathrm{UT}_n (K) = \langle t_{2,1} (K), t_{3,2}(K), \ldots , t_{n,n-1} (K) \rangle .
$$

Starting with the case $K = \mathbb Q$, we observe that the additive group $\langle \mathbb Q; + \rangle$ is infinitely generated. It is generated by the elements of the form $\frac{1}{p}$, where $p$ runs over all prime integers. So, the subgroup $t_{i+1,i}(\mathbb Q)$ of $\mathrm{UT}_n (\mathbb Q)$, where $1 \le i \le n-1$, is infinitely generated by the elements $t_{i+1, i} (\frac{1}{p})$, where $p$ runs over all prime integers.  But, in the following lemmas, we show that if $\varphi$ is a basis conjugating automorphism of $\mathrm{UT}_n (\mathbb Q)/ \gamma_k(\mathrm{UT}_n(\mathbb Q))$, then the image of $t_{i+1,i} (\mathbb Q)$ (modulo $\gamma_k(\mathrm{UT}_n(\mathbb Q))$) under $\varphi$ is determined by the image of the element $t_{i+1,i}(a/b)$ for some arbitrarily chosen fixed rational number $a / b$.

Here and below we use the same notations for transvections in $\mathrm{UT}_n(K)$ and for their images in $\mathrm{UT}_n(K)/\gamma_k(\mathrm{UT}_n(K))$, where $2 \le k \le n$.

\begin{lemma} \label{lemma_2.1}
Let $\varphi$  be a basis conjugating automorphism of  $\mathrm{UT}_n (\mathbb Q) / \gamma_{3} (\mathrm{UT}_n (\mathbb  Q))$. Then  for any $i$ the action of $\varphi$ on the subgroup $t_{i+1,i} (\mathbb Q)$ (modulo $\gamma_{3} (\mathrm{UT}_n (\mathbb  Q))$) is  uniquely determined by the image of $t_{i+1, i} (\frac{a}{b})$ for an arbitrarily chosen non-zero rational number $a / b \in \mathbb Q$.
\end{lemma}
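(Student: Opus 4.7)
The plan is to exploit class-$2$ nilpotency of the group $\Gamma := \mathrm{UT}_n(\mathbb{Q})/\gamma_3(\mathrm{UT}_n(\mathbb{Q}))$. Because $\gamma_2(\Gamma)$ is central, for each fixed $t \in \Gamma$ the map $g \mapsto [t,g]$ is a homomorphism $\Gamma \to \gamma_2(\Gamma)$ which factors through the abelian quotient $\Gamma/\gamma_2(\Gamma) \cong \Gamma_{n,2}$, a direct sum of copies of $\mathbb{Q}$ indexed by the generating subgroups $t_{j+1,j}(\mathbb{Q})$.

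First I would use that $\varphi$ is basis conjugating: for each $\alpha \in \mathbb{Q}$ there exists $g_\alpha \in \Gamma$ with $\varphi(t_{i+1,i}(\alpha)) = g_\alpha^{-1} t_{i+1,i}(\alpha)\, g_\alpha = t_{i+1,i}(\alpha)\,[t_{i+1,i}(\alpha),g_\alpha]$. Writing $g_\alpha \equiv \prod_{j=1}^{n-1} t_{j+1,j}(\mu_j(\alpha))$ modulo $\gamma_2(\Gamma)$, and using the commutator relations recalled at the start of the section to eliminate all commutators $[t_{i+1,i}(\alpha), t_{j+1,j}(\mu)]$ with $j \neq i \pm 1$, the expression collapses to
\[
\varphi\bigl(t_{i+1,i}(\alpha)\bigr) \;=\; t_{i+1,i}(\alpha)\cdot t_{i+1,i-1}\!\bigl(\alpha\,\mu_{i-1}(\alpha)\bigr)\cdot t_{i+2,i}\!\bigl(-\alpha\,\mu_{i+1}(\alpha)\bigr),
\]
with the usual convention that the left (respectively right) correction factor is absent when $i = 1$ (respectively $i = n-1$).

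Next I would apply $\varphi$ to the defining identity $t_{i+1,i}(\alpha)\,t_{i+1,i}(\beta) = t_{i+1,i}(\alpha+\beta)$. Since the two correction factors lie in the central subgroup $\gamma_2(\Gamma)$, they commute past $t_{i+1,i}(\beta)$; comparing coordinates in the independent summands $t_{i+1,i-1}(\mathbb{Q})$ and $t_{i+2,i}(\mathbb{Q})$ of $\gamma_2(\Gamma)/\gamma_3(\Gamma)$ then forces both functions
\[
f_{i-1}(\alpha) := \alpha\,\mu_{i-1}(\alpha), \qquad f_{i+1}(\alpha) := \alpha\,\mu_{i+1}(\alpha)
\]
to be additive on $(\mathbb{Q},+)$.

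Finally, any additive map $\mathbb{Q} \to \mathbb{Q}$ is automatically $\mathbb{Q}$-linear, so $f_{i \pm 1}(\alpha) = c_{i \pm 1}\,\alpha$ for uniquely determined constants $c_{i \pm 1} \in \mathbb{Q}$; a single nonzero value $\varphi(t_{i+1,i}(a/b))$ suffices to extract $c_{i \pm 1} = b\,f_{i \pm 1}(a/b)/a$ and thereby reconstruct $\varphi(t_{i+1,i}(\alpha))$ for every $\alpha \in \mathbb{Q}$. The main obstacle is the commutator bookkeeping in the first step; everything else is a one-line appeal to the (elementary) rigidity of additive maps on $\mathbb{Q}$, a rigidity that is precisely what will fail over larger fields and explain the role of prime fields in Theorem~A.
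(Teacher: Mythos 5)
Your proof is correct and is essentially the paper's argument in slightly more conceptual dress: the paper pins down the correction terms by applying $\varphi$ to the single identity $(t_{i+1,i}(r/s))^{sa} = (t_{i+1,i}(a/b))^{br}$ and comparing the central factors, which is exactly your observation that the correction functions are additive on $(\mathbb{Q},+)$, hence $\mathbb{Q}$-linear, hence determined by one nonzero value. Both arguments rest on the same two facts --- centrality of $\gamma_2$ in the class-$2$ quotient and the rigidity of additive maps on $\mathbb{Q}$ --- so there is no substantive difference to report.
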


\begin{proof}
Let us fix a non-zero rational number $a/b \in \mathbb Q$ and suppose that
$$
\varphi (t_{i+1, i} ( a / b) ) = t_{i+1, i} (a / b) \, t_{i+1, i-1} (\lambda) \, t_{i+2, i} (\mu)
$$
for some $\lambda, \mu \in \mathbb Q$, where $1 \le i \le n-1$, and we set $t_{i+1, i-1} (\lambda)= 1$ when $i = 1$ and  $t_{i+2, i} (\mu) = 1$ when $i = n-1$.  To complete the proof of the lemma, we need to find $\varphi(t_{i+1, i} (r / s))$ for any rational number $r / s$. This simply means that we need to find  $x, y \in \mathbb Q$ (depending only on $a$, $b$, $\lambda$, $\mu$, $r$ and $s$) such that
$$
\varphi (t_{i+1, i} (r / s) ) = t_{i+1,i} (r/s) \, t_{i+1, i-1} (x) \, t_{i+2, i} (y).
$$
The obvious relation
$(t_{i+1, i} (r / s))^{s a} = (t_{i+1,i} (a/b))^{b r}$, gives us
$$
\varphi ( (t_{i+1, i} (r/s) )^{s a} ) = \varphi ( ( t_{i+1, i} (a/b))^{b r} ),
$$
which is equivalent to
$$
t_{i+1, i} (ar) \, t_{i+1, i-1} (a s x) \, t_{i+2, i} (a s y) = t_{i+1, i} (ar) \, t_{i+1, i-1} (b r \lambda) \, t_{i+2,i} (b r  \mu).
$$
Thus comparing the corresponding entries of the matrices on both sides, we get
$$
a s x = b r \lambda, \qquad a s y = b r \mu.
$$
This gives the required values
$$
x = \frac{b r \lambda}{a s}, \qquad y = \frac{b r \mu}{a s},
$$
which completes the proof.
\end{proof}

In the following lemma we consider basis conjugating automorphisms $\varphi$ of the group $\mathrm{UT}_n (\mathbb Q) / \gamma_{k+1} (\mathrm{UT}_n (\mathbb  Q))$ such that for each $i$, $\varphi(t_{i+1,i}(w_i)) = (t_{i+1,i}(w_i))^{g_i}$, where  $g_i = t_{k,1}(x_{i,1})\, t_{k+1,2}(x_{i,2})\cdots t_{n,n-k+1}(x_{i,n-k+1})$. The proof of this lemma goes on the lines of the proof of Lemma \ref{lemma_2.1}.

\begin{lemma}\label{lemma_2.2}
 Let $\varphi$  be an automorphism of $\mathrm{UT}_n (\mathbb Q) / \gamma_{k+1} (\mathrm{UT}_n (\mathbb  Q))$ such that for $k \le n-k$,
$$
\varphi(t_{i+1, i}(w_i))  = \left\{%
\begin{array}{l}
t_{i+1,i} (w_i)\, t_{i+k,i}(-x_{i,i+1} w_i), \hfill\;\;  \mbox{if}~ i < k;\\
t_{i+1,i}(w_i) \,t_{i+1, i-k+1}(x_{i,i-k+1} w_i)\, t_{i+k,i}(-x_{i,i+1} w_i),  \hfill \mbox{if}~k \le i \le n-k;\\
t_{i+1,i}(w_i)\, t_{i+1, i-k+1}(x_{i,i-k+1} w_i), \hfill \mbox{if}~ i > n-k,
\end{array}%
\right.
$$
and for $k > n-k$,
$$
\varphi(t_{i+1, i}(w_i))  = \left\{%
\begin{array}{l}
t_{i+1,i} (w_i)\, t_{i+k,i}(-x_{i,i+1} w_i), \hfill \;\;\mbox{if}~ i \le  n-k;\\
t_{i+1,i}(w_i),  \hfill \mbox{if}~ n-k < i < k;\\
t_{i+1,i}(w_i)\, t_{i+1,i-k+1}(x_{i,i-k+1} w_i), \hfill \mbox{if}~ i \ge  k,
\end{array}%
\right.
$$
where $w_i, x_{i,i+1}, x_{i,i-k+1} \in \mathbb Q$. Then for any $i$, the action of $\varphi$ on the subgroup $t_{i+1,i} (\mathbb Q)$ (modulo $\gamma_{k+1} (\mathrm{UT}_n (\mathbb  Q))$) is  uniquely determined by the image of $t_{i+1, i} (\frac{a}{b})$ for an arbitrarily chosen non-zero rational number $a / b \in \mathbb Q$.
\end{lemma}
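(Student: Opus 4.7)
The plan mirrors the strategy of Lemma \ref{lemma_2.1}, only now working modulo $\gamma_{k+1}$ instead of $\gamma_3$. I would fix a nonzero rational $a/b$ and record the given image of $t_{i+1,i}(a/b)$ under $\varphi$, which in the "generic" middle case looks like
$$
\varphi(t_{i+1,i}(a/b)) \;=\; t_{i+1,i}(a/b)\, t_{i+1,i-k+1}(x_{i,i-k+1}\, a/b)\, t_{i+k,i}(-x_{i,i+1}\, a/b),
$$
with the appropriate one of the two right-hand factors omitted in the edge subcases.  For an arbitrary nonzero $r/s$, I would write
$$
\varphi(t_{i+1,i}(r/s)) \;=\; t_{i+1,i}(r/s)\, t_{i+1,i-k+1}(\lambda)\, t_{i+k,i}(\mu)
$$
with unknown $\lambda, \mu \in \mathbb{Q}$, and aim to prove $\lambda = x_{i,i-k+1}\, r/s$ and $\mu = -x_{i,i+1}\, r/s$.

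To produce the needed equations, I would exploit the identity
$$
\bigl(t_{i+1,i}(r/s)\bigr)^{sa} \;=\; t_{i+1,i}(ar) \;=\; \bigl(t_{i+1,i}(a/b)\bigr)^{br}
$$
and apply $\varphi$ to both sides.  The step that makes everything work modulo $\gamma_{k+1}$ is the observation that the correction transvections $t_{i+1,i-k+1}(\cdot)$ and $t_{i+k,i}(\cdot)$ are $k$-step transvections and therefore belong to $\gamma_k(\mathrm{UT}_n(\mathbb{Q}))$; since $[\gamma_k, G] \subseteq \gamma_{k+1}$, they become central in $G/\gamma_{k+1}$.  Consequently, for any $g \in \gamma_k$ and any positive integer $N$, one has $(ug)^N \equiv u^N g^N \pmod{\gamma_{k+1}}$ (all higher Hall--Petresco corrections land in $\gamma_{k+1}$).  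Raising the two displayed expressions to the $sa$-th and $br$-th powers and then matching the coefficients of the two independent transvections $t_{i+1,i-k+1}(\cdot)$ and $t_{i+k,i}(\cdot)$ yields the linear relations
$$
sa\,\lambda \;=\; ar\, x_{i,i-k+1}, \qquad sa\,\mu \;=\; -ar\, x_{i,i+1},
$$
which solve uniquely to give the expected values of $\lambda$ and $\mu$.

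The principal obstacle is largely bookkeeping: the lemma is stated in two regimes ($k \le n-k$ and $k > n-k$) with a three-fold subdivision by the index $i$, corresponding to which of the two correction transvections actually appear.  In each subcase the argument above goes through verbatim, simply omitting whichever correction factor is forced to be trivial by the indexing restriction.  The only genuinely nontrivial input is centrality of $\gamma_k/\gamma_{k+1}$ in $G/\gamma_{k+1}$, which is what allows the $N$-th power of a product to split cleanly and prevents any extraneous commutator terms from appearing.
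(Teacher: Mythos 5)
Your proposal is correct and is essentially the argument the paper intends: the paper gives no separate proof of Lemma \ref{lemma_2.2}, stating only that it "goes on the lines of the proof of Lemma \ref{lemma_2.1}," and your argument is precisely that adaptation — the identity $(t_{i+1,i}(r/s))^{sa}=(t_{i+1,i}(a/b))^{br}$ together with the centrality of the $k$-step correction transvections modulo $\gamma_{k+1}$, followed by coefficient matching. You have also correctly isolated the one point that needs saying (centrality of $\gamma_k/\gamma_{k+1}$, which makes the power of a product split), so nothing is missing.
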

In the following theorem, we prove the sufficient part of Theorem A.

\begin{theorem} \label{theorem-new}
Let  $\Gamma_{n, k} = \mathrm{UT}_n(K) / \gamma_k \left(\mathrm{UT}_n(K)\right)$, where  $K$ is a prime field, $n\geq 3$, $2 \leq k \leq n$. Then $\Aut_c(\Gamma_{n, k}) = \Inn(\Gamma_{n, k})$.
\end{theorem}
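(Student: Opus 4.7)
My plan is induction on $k$. The base case $k = 2$ is trivial because $\Gamma_{n,2}$ is abelian, so $\mathrm{Aut}_c(\Gamma_{n,2}) = \mathrm{Inn}(\Gamma_{n,2}) = \{1\}$. For the inductive step, fix $\alpha \in \mathrm{Aut}_c(\Gamma_{n,k})$. The central subgroup $Z := \gamma_{k-1}(\mathrm{UT}_n(K))/\gamma_k(\mathrm{UT}_n(K))$ equals $\gamma_{k-1}(\Gamma_{n,k})$ and is characteristic, so $\alpha$ descends to a class preserving automorphism of $\Gamma_{n,k}/Z \cong \Gamma_{n,k-1}$, which is inner by the inductive hypothesis. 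Composing with a suitable inner automorphism of $\Gamma_{n,k}$, I may assume $\alpha$ is the identity modulo $Z$, i.e. $\alpha(x) = x \cdot c(x)$ with $c(x) \in Z$. Centrality of $Z$ makes $c \colon \Gamma_{n,k} \to Z$ a group homomorphism factoring through $\Gamma_{n,k}^{ab} \cong K^{n-1}$.

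Here is where the prime-field hypothesis enters: restricted to each copy of $(K,+) \cong t_{i+1,i}(K)$, the map $c$ is an additive homomorphism into the $K$-vector space $Z$, and over a prime field every such map is $K$-linear (trivial for $\mathbb{F}_p$, and a consequence of divisibility of $Z$ for $\mathbb{Q}$). Consequently $c$ is determined by the vectors $c_i := c(t_{i+1,i}(1)) \in Z$ for $i = 1, \ldots, n-1$. It therefore suffices to produce a single $g^\ast$ with $[t_{i+1,i}(1), g^\ast] = c_i$ for every $i$: indeed, $\alpha$ and $\mathrm{inn}(g^\ast)$ then coincide on each $t_{i+1,i}(1)$, and Lemmas~\ref{lemma_2.1} and~\ref{lemma_2.2} (together with the trivial fact $t_{i+1,i}(\mathbb{F}_p) = \langle t_{i+1,i}(1)\rangle$) force them to coincide on all of $\Gamma_{n,k}$.

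Because $\mathrm{inn}(g^\ast)$ must act trivially modulo $\gamma_{k-1}$, the image of $g^\ast$ lies in the preimage of $Z(\Gamma_{n,k-1}) = \gamma_{k-2}(\mathrm{UT}_n(K))/\gamma_{k-1}(\mathrm{UT}_n(K))$, and because $[t_{i+1,i}(1), \gamma_{k-1}(\mathrm{UT}_n(K))] \subseteq \gamma_k(\mathrm{UT}_n(K))$, the task reduces to the linear-algebraic question of whether $(c_1,\dots,c_{n-1})$ lies in the image of the $K$-linear map
\[
\Phi \colon \gamma_{k-2}(\mathrm{UT}_n(K))/\gamma_{k-1}(\mathrm{UT}_n(K)) \longrightarrow Z^{n-1}, \qquad h \longmapsto \bigl([t_{i+1,i}(1), h]\bigr)_{i=1}^{n-1}.
\]
The image of $\Phi$ is cut out by explicit linear relations computed from the commutator formulas recalled at the start of Section~2. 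On the other hand the hypothesis $c(x) \in [x, \Gamma_{n,k}]$, applied to products of the form $x = t_{i+1,i}(1)\, t_{i+k-1, i+k-2}(1)$ (and to iterated variants that propagate along the lower central filtration for large $k$), forces $(c_i)_i$ to satisfy exactly those relations: the class preserving constraint on such a product couples the coefficient of a basis element of $Z$ as it appears in $c_i$ with its appearance in $c_{i+k-2}$, mirroring the overlap pattern cut out by $\Phi$.

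The main technical obstacle is this final combinatorial step: selecting a family of product elements $x$ rich enough that the resulting class preserving constraints exhaust (rather than merely lie among) the defining relations of $\mathrm{Im}(\Phi)$, and then verifying the matching via the $[t_{ik}(\alpha),t_{mj}(\beta)]$ identities. For $k = 3$ one sees directly that $x = t_{i+1,i}(1) t_{i+3,i+2}(1)$ gives precisely the overlap relation needed to solve for the parameters of $g^\ast$, and an analogous but more elaborate bookkeeping — organized layer by layer in the lower central series — treats the general $k$.
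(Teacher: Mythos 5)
Your proposal follows essentially the same route as the paper's proof: induction on $k$, reduction to a central class preserving automorphism, prime-field linearity (which is exactly the content of Lemmas~\ref{lemma_2.1} and~\ref{lemma_2.2}), and coupling of the conjugating parameters by applying class-preservation to products of two suitably spaced generators --- the ``more elaborate bookkeeping'' you defer for general $k$ is precisely what the paper carries out with the elements $h_{i,k} = t_{i+1,i}(s)\,t_{i+k+1,i+k}(s)$. One small correction: in your normalization (working directly in $\Gamma_{n,k}$, where the conjugators lie in $\gamma_{k-2}$) the second factor of the test element should be $t_{i+k,i+k-1}(1)$, an index gap of $k-1$ consistent with your own $k=3$ instance $t_{i+3,i+2}(1)$, rather than $t_{i+k-1,i+k-2}(1)$.
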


\begin{proof}
Let us fix a positive integer $n \ge 3$. Since $K$ is a prime field, then $K = \mathbb F_p$ or $K = \mathbb Q$. The nilpotency class of $\Gamma_{n, k}$ is $(k-1)$ and   $\Z(\Gamma_{n, k}) \cong K^{n-(k-1)}$ is generated by the subgroups
$$
t_{k,1}(K), t_{k+1,2}(K), \ldots, t_{n,n-k+1}(K).
$$
It is easy to see that $\Gamma_{n, k} / \Z(\Gamma_{n, k}) \cong \Gamma_{n, k-1}$.

We use induction on $k$ to prove  $\Aut_c(\Gamma_{n, k}) = \Inn (\Gamma_{n, k})$, where  $2 \le k \le n$. If $k=2$, then $\Gamma_{n, 2} \cong K^{n-1}$ is abelian and the statement holds trivially. Before going to the inductive step, we compute the case when $k = 3$ just to show  how things work. Let $\varphi \in \Aut_c(\Gamma_{n, 3})$.  Then it follows from Lemma \ref{lemma_2.1} that $\varphi$ is completely determined by its images on $t_{i+1,i} (s)$, where $s = 1$ if $K = \mathbb F_p$ and $s = 1/ m$, for some fixed prime $m$, if $K = \mathbb Q$. Let $g = t_{2,1}(\alpha_1) \, t_{3,2} (\alpha_2) \cdots t_{n,n-1} (\alpha_{n-1})$, where  $\alpha_1, \alpha_2, \ldots , \alpha_{n-1} \in K$. Since the nilpotency class of $\Gamma_{n, 3}$ is $2$, for a fixed $s$, we have
\begin{eqnarray*}
(t_{2,1} (s))^g & = & t_{2,1}(s) \, t_{3,1}(-\alpha_2 s), \cr
(t_{3,2}(s))^g & = & t_{3,2}(s) \, t_{3,1}(\alpha_1 s) \, t_{4,2}(-\alpha_3 s), \cr & \vdots & \cr
(t_{i+1,i}(s))^g & = & t_{i+1,i} (s) \,  t_{i+1,i-1}(\alpha_{i-1} s) \, t_{i+2,i}(-\alpha_{i+1} s), \cr
& \vdots & \cr
(t_{n-1,n-2} (s))^g & = & t_{n-1,n-2} (s) \,  t_{n-1,n-3}(\alpha_{n-3} s) \, t_{n,n-2}(-\alpha_{n-1} s),  \cr
(t_{n, n-1}(s))^g & = & t_{n,n-1} (s) \,  t_{n,n-2}(\alpha_{n-2} s).
\end{eqnarray*}

Since $\varphi \in \Aut_c(\Gamma_{n, 3})$ and the nilpotency class of $\Gamma_{n, 3}$ is $2$, there exist $x_2, x_3, \ldots , $ $x_{n-1},$ $y_1, y_2, \ldots, y_{n-2} \in K$ such that
\begin{eqnarray*}
\varphi(t_{2,1}(s)) & = & t_{2,1}(s) \,  t_{3,1}(-y_1 s), \cr
\varphi(t_{3,2}(s)) & = & t_{3,2}(s) \, t_{3,1}(x_2 s) \, t_{4,2}(-y_2 s), \cr
& \vdots & \cr
\varphi(t_{i+1,i} (s)) & = & t_{i+1,i} (s) \,  t_{i+1,i-1}(x_i s) \, t_{i+2,i}(-y_i s), \cr &
\vdots & \cr \varphi(t_{n-1,n-2} (s)) & = & t_{n-1,n-2} (s) \,
t_{n-1,n-3}(x_{n-2} s) \, t_{n,n-2}(-y_{n-2} s), \cr
\varphi(t_{n,n-1}(s)) & = & t_{n,n-1}(s) \,  t_{n,n-2}(x_{n-1} s).
\end{eqnarray*}

Automorphism $\varphi$ preserves all conjugacy classes in $\Gamma_{n, 3}$, so, in particular, it also preserve the classes of elements $h_i = t_{i+1,i}(s) \, t_{i+3,i+2,}(s)$, where $i = 1, \ldots, n-3$. Acting by $\varphi$ we have
$$
\varphi(h_1) =  \varphi (t_{2,1}(s) \, t_{4,3} (s))   =  t_{2,1}(s) \, t_{4,3}(s) \, t_{3,1}(-y_1 s) t_{4,2}(x_{3} s) \, t_{5, 3} (-y_{3} s).
$$
Since $h_1$ and $\varphi(h_1)$ are in the same conjugacy class, $\varphi (h_1) = h_1^{g_1}$ for some element $g_1 \in \Gamma_{n, 3}$, say
$g_1 = t_{2,1}(\alpha_1^1) \, t_{3,2} (\alpha_2^1) \cdots t_{n,n-1} (\alpha_{n-1}^1)$. Now
\begin{eqnarray*}
h_1^{g_1}  & = & (t_{2,1}(s) \, t_{4,3}(s))^{g_1} \cr
& =  & t_{2,1} (s) \, t_{3,1}(-\alpha_{2}^1 s) \, t_{4,3} (s) \,  t_{4,2}(\alpha_{2}^1 s) \, t_{5,3}(-\alpha_{4}^1 s) \cr
& =  & t_{2,1} (s) \, t_{4,3} (s) \, t_{3,1}(-\alpha_{2}^1 s) \, t_{4,2}(\alpha_{2}^1 s) \, t_{5,3}(-\alpha_{4}^1 s).
\end{eqnarray*}
{}From $\varphi(h_1) = h_1^{g_1}$, by comparing the corresponding entries of the matrices we get
\begin{equation}
y_1 = x_3. \label{new-1}
\end{equation}
Moreover, $\alpha_2^1 = y_1$, $\alpha_4^1 = y_3$, and for $j\not\in \{ 2, 4\}$, numbers $\alpha_j^1$ can be taken arbitrarily.

For $2 \leqslant i \leqslant n-4$ we have
\begin{eqnarray*}
\varphi(h_i) & =  & \varphi (t_{i+1,i}(s) \, t_{i+3,i+2} (s))  \cr
& = & t_{i+1,i}(s) \, \, t_{i+1,i-1}(x_i s) \, t_{i+2,i}(-y_i s)  \, t_{i+3,i+2}(s) \,  t_{i+3,i+1}(x_{i+2} s) \, t_{i+4, i+2} (-y_{i+2} s) \cr
& = & t_{i+1,i}(s) \, t_{i+3,i+2}(s) \, t_{i+1,i-1}(x_i s) \,t_{i+2,i}(-y_i s) \, t_{i+3,i+1}(x_{i+2} s) \, t_{i+4, i+2} (-y_{i+2} s).
\end{eqnarray*}
Since $h_i$ and $\varphi(h_i)$ are in the same conjugacy class, $\varphi (h_i) = h_i^{g_i}$ for some element $g_i \in \Gamma_{n, 3}$, say $g_i = t_{2,1}(\alpha_1^i) \, t_{3,2} (\alpha_2^i) \cdots t_{n,n-1} (\alpha_{n-1}^i)$. Now
\begin{eqnarray*}
h_i^{g_i}  & = & (t_{i+1,i} (s) \, t_{i+3,i+2} (s))^{g_i} \cr
& = & t_{i+1,i} (s) \,  t_{i+1, i-1} (\alpha_{i-1}^i s)\, t_{i+2,i}(-\alpha_{i+1}^i s) \, t_{i+3,i+2} (s) \, t_{i+3,i+1}(\alpha_{i+1}^i s) \, t_{i+4,i+2}(-\alpha_{i+3}^i s) \cr
& = & t_{i+1,i} (s) \, t_{i+3,i+2} (s) \, t_{i+1, i-1} (\alpha_{i-1}^i s) \, t_{i+2,i}(-\alpha_{i+1}^i s)  \, t_{i+3,i+1}(\alpha_{i+1}^i s) \, t_{i+4,i+2}(-\alpha_{i+3}^i s).
\end{eqnarray*}
{}From $\varphi (h_i) = h_i^{g_i}$, we get
\begin{equation}
y_i = x_{i+2} \label{new-2}
\end{equation}
for $i=2, \ldots, n-4$. Moreover, $\alpha_{i+1}^i = y_i$, $\alpha_{i+3}^i = y_{i+2}$, and for $j\not\in \{i-1,  i+1, i+3\}$, numbers $\alpha_j^i$ can be chosen arbitrarily.

Finally, we have
\begin{eqnarray*}
\varphi(h_{n-3}) & =  & \varphi (t_{n-2,n-3}(s) \, t_{n,n-1} (s))  \cr
& = & t_{n-2,n-3}(s) \, t_{n-2, n-4}(x_{n-3} s) \, t_{n-1,n-3}(-y_{n-3} s)  \, t_{n,n-1} (s) \, t_{n,n-2}(x_{n-1} s) \cr
& = & t_{n-2,n-3}(s) \, t_{n,n-1} (s) \, t_{n-2, n-4}(x_{n-3} s) \, t_{n-1,n-3}(-y_{n-3} s)  \, t_{n,n-2}(x_{n-1} s).
\end{eqnarray*}
We have $\varphi(h_{n-3}) = h_{n-3}^{g_{n-3}}$ for some element $g_{n-3} \in \Gamma_{n, 3}$, say
$g_{n-3} = t_{2,1}(\alpha_1^{n-3})$ $t_{3,2} (\alpha_2^{n-3})$ $ \cdots t_{n,n-1} (\alpha_{n-1}^{n-3})$. So
\begin{eqnarray*}
h_{n-3}^{g_{n-3}}  & = & (t_{n-2,n-3} (s) \, t_{n,n-1} (s) )^{g_{n-3}}  \cr
& = & t_{n-2,n-3} (s) \, t_{n-2, n-4}(\alpha_{n-4}^{n-3} s) \, t_{n-1,n-3}(-\alpha_{n-2}^{n-3} s)  \, t_{n,n-1} (s) \, t_{n,n-2}(\alpha_{n-2}^{n-3} s) \cr
& = & t_{n-2,n-3} (s) \, t_{n,n-1} (s) \, t_{n-2, n-4} (\alpha_{n-4}^{n-3} s) \, t_{n-1,n-3}(-\alpha_{n-2}^{n-3} s) \, t_{n,n-2}(\alpha_{n-2}^{n-3} s).
\end{eqnarray*}
Therefore, by comparing the corresponding entries of $\varphi(h_{n-3})$ and $(h_{n-3})^{g_{n-3}}$, we get
\begin{equation}
y_{n-3} = x_{n-1}. \label{new-3}
\end{equation}
Moreover, $\alpha_{n-4}^{n-3} = x_{n-3}$, hence by (\ref{new-2}) $\alpha_{n-4}^{n-3} = y_{n-5}$, and   $\alpha_{n-2}^{n-3} = y_{n-3}$. For $j\not\in \{ n-4, n-2\}$ numbers $\alpha_j^{n-3}$ can be taken arbitrarily.

Thus, it follows from (\ref{new-1}), (\ref{new-2}) and (\ref{new-3}) that by defining
$$
\alpha_1 = x_2, \quad \alpha_2 = y_1, \quad \alpha_3 = y_2, \quad \ldots , \quad  \alpha_{n-1} = y_{n-2}
$$
and
$g = t_{2,1}(\alpha_1) \, t_{3,2} (\alpha_2) \cdots t_{n,n-1} (\alpha_{n-1})$, we obtain
$$
\varphi (t_{i+1, i} (s)) = (t_{i+1, i} (s))^g, \qquad i=1, \ldots, n-1.
$$
This proves that $\varphi$ is the inner automorphism determined by $g$. Hence $\Aut_c(\Gamma_{n, 3}) = \Inn (\Gamma_{n, 3})$

\medskip

Let us now assume  that $\Aut_c(\Gamma_{n, k}) = \Inn(\Gamma_{n, k})$ for some $k \geq 3$. The natural homomorphism  $\Gamma_{n, k+1} \longrightarrow \Gamma_{n, k}$ induces the homomorphism
$$
\Aut(\Gamma_{n, k+1})  \longrightarrow \Aut(\Gamma_{n, k}).
$$
Let $\overline{\varphi}$ denote the image of $\varphi \in \Aut(\Gamma_{n, k+1})$ under this homomorphism.

Suppose that $\varphi \in \Aut_c(\Gamma_{n, k+1})$. Then $\overline{\varphi} \in \Aut_c(\Gamma_{n, k})$. By the induction hypothesis there exists an inner automorphism $\overline{\omega} \in \Inn(\Gamma_{n, k})$ such that $\overline{\varphi}\: \overline{\omega}$ is the identity automorphism of $\Gamma_{n, k}$. Let $\omega \in \Inn(\Gamma_{n, k+1})$ be the pre-image of $\overline{\omega}$. Then $\psi := \varphi\: \omega$ is a central automorphism of $\Gamma_{n, k+1}$. Since $\psi$ is central as well as class preserving, it acts on every $t_{i+1,i}(\alpha)$, $\alpha \in K$ by conjugating it by some element $x_i$ of the form
$$
t_{k,1}(x_{i,1}) \, t_{k+1,2}(x_{i,2}) \, \cdots \, t_{n,n-k+1}(x_{i,n-k+1}),
$$
where $i = 1, 2, \ldots, n-1$ and $x_{i,j} \in K$.  By Lemma \ref{lemma_2.2}, it is sufficient to study $\psi$ on the generating elements $t_{i+1, i}(s)$, where $s = 1$ if $K = \mathbb F_p$ and $s = 1/ m$, for some fixed prime $m$, if $K = \mathbb Q$. Notice that $\psi$ is defined on these generating elements in the following way:

For $k \le n-k$,
$$
\psi(t_{i+1, i}(s))  = \left\{%
\begin{array}{l}
t_{i+1,i} (s)\, t_{i+k,i}(-x_{i,i+1} s), \hfill\;\; \mbox{if}~ i < k;\\
t_{i+1,i}(s) \,t_{i+1, i-k+1}(x_{i, i-k+1} s) \,t_{i+k,i}(-x_{i, i+1} s), \; \hfill \mbox{if}~k \le i \le n-k;\\
t_{i+1,i}(s)\, t_{i+1, i-k+1}(x_{i,i-k+1} s), \hfill \mbox{if}~ i > n-k,
\end{array}%
\right.
$$
and for $k > n-k$,
$$
\psi(t_{i+1, i}(s))  = \left\{%
\begin{array}{l}
t_{i+1,i} (s) \,t_{i+k,i}(-x_{i,i+1} s), \hfill \;\;\mbox{if}~ i \le  n-k;\\
t_{i+1,i}(s),  \hfill \mbox{if}~ n-k < i < k;\\
t_{i+1,i}(s) \,t_{i+1,i-k+1}(x_{i,i-k+1} s), \hfill \mbox{if}~ i \ge  k.
\end{array}%
\right.
$$

Also notice that an inner automorphism $f_g$ determined by
$$
g = t_{k,1}(\alpha_{1}) \, t_{k+1,2}(\alpha_{2}) \, \cdots \, t_{n,n-k+1}(\alpha_{n-k+1}), \;\; \alpha_{j} \in K \;\;\text{for}\;\; 1 \le j \le n-k+1,
$$
is defined on the generating elements in the following way:

For $k \le n-k$,
$$
f_g(t_{i+1, i}(s))  = \left\{%
\begin{array}{l}
t_{i+1,i} (s)\, t_{i+k,i}(-\alpha_{i+1} s), \hfill\;\; \mbox{if}~ i < k;\\
t_{i+1,i}(s)\, t_{i+1, i-k+1}(\alpha_{i-k+1} s)\, t_{i+k,i}(-\alpha_{i+1} s),  \hfill \mbox{if}~k \le i \le n-k;\\
t_{i+1,i}(s) \,t_{i+1, i-k+1}(\alpha_{i-k+1} s), \hfill \mbox{if}~ i > n-k,
\end{array}%
\right.
$$
and for $k > n-k$,
$$
f_g(t_{i+1, i}(s))  = \left\{%
\begin{array}{l}
t_{i+1,i} (s)\, t_{i+k,i}(-\alpha_{i+1} s), \hfill \;\;\mbox{if}~ i \le  n-k;\\
t_{i+1,i}(s),  \hfill \mbox{if}~ n-k < i < k;\\
t_{i+1,i}(s)\, t_{i+1,i-k+1}(\alpha_{i-k+1} s), \hfill \mbox{if}~ i \ge  k.
\end{array}%
\right.
$$

Let us consider the elements
$$
h_{i,k}  =  t_{i+1,i}(s)  \, t_{i+k+1,i+k} (s),
$$
where $i = 1, 2, \ldots, n-k-1$.  Then for $k \le n-k$
$$
\psi(h_{i, k}(s))  = \left\{%
\begin{array}{l}
t_{i+1,i}(s)\,t_{i+k+1,i+k} (s)\, t_{i+k,i}(-x_{i,i+1} s)\, t_{i+k+1, i+1}(x_{i+k, i+1} s)\\ \;\; \cdot t_{i+2k,i+k}(-x_{i+k, i+k+1} s), \hfill\;\; \mbox{if}~ i < k~\mbox{and}~i+k \le n-k ;\\
t_{i+1,i}(s)\,t_{i+k+1,i+k} (s)\, t_{i+k,i}(-x_{i,i+1} s)\,t_{i+k+1, i+1}(x_{i+k, i+1} s),\\ \hfill\;\; \mbox{if}~i < k~\mbox{and}~i+k > n-k;\\
t_{i+1,i}(s)\, t_{i+k+1,i+k} (s)\, t_{i+1, i-k+1}(x_{i, i-k+1} s)\, t_{i+k,i}(-x_{i, i+1} s)\\ \;\; \cdot t_{i+k+1, i+1}(x_{i+k, i+1} s) \,t_{i+2k,i+k}(-x_{i+k, i+k+1} s),\\  \hfill \mbox{if}~k \le i \le n-k~\mbox{and}~k \le i+k \le n-k ;\\
t_{i+1,i}(s)\, t_{i+k+1,i+k} (s)\, t_{i+1, i-k+1}(x_{i, i-k+1} s) \,t_{i+k,i}(-x_{i, i+1} s)\\ \;\; \cdot t_{i+k+1, i+1}(x_{i+k,i+1} s),  \hfill \mbox{if}~k \le i \le n-k~\mbox{and}~ i+k > n-k,
\end{array}%
\right.
$$
and for $k > n-k$,
$$
\psi(h_{i, k}(s)) = t_{i+1,i} (s)\, t_{i+k+1,i+k} (s)\, t_{i+k,i}(-x_{i,i+1} s)\,t_{i+k+1,i+1}(x_{i+k,i+1} s),
$$
for all $i \le  n-k-1$.

The reason why we are considering $h_{i,k} = t_{i+1,i}(s)  \, t_{i+k+1,i+k} (s)$ is that $\psi(t_{i+1,i})$ and $\psi(t_{j+1,j})$ both involve conjugation by elements from the same subgroup  ($t_{i+k,i+1}(K)$) only when $|i - j| = k$. More precisely, when $i+k =j$,  $\psi(t_{i+1,i})$ and $\psi(t_{j+1,j})$ involve $(t_{i+1,i})^{t_{i+k,i+1}(x_{i, i+1})}$ and $(t_{j+1,j})^{t_{j,i+1}(x_{j, i+1})}$ respectively.  If  $|i - j| \not= k$, then $t_{i+1,i}(s)$ and $t_{i+1,i}(s)$ centralize the subgroups $t_{j+k,j+1}(K)$ and $t_{i+k,i+1}(K)$ respectively.

So to prove $\psi$ is inner, it is sufficient to prove that $x_{i, i+1} = x_{i+k, i+1}$ for $1 \le i \le n-k-1$. Since $\psi$ is class preserving, there exists an element 
$$g_i = t_{k,1}(y_{i,1}) \, t_{k+1,2}(y_{i,2}) \, \cdots \, t_{n,n-k+1}(y_{i,n-k+1})$$
 such that $\psi(h_{i,k}) = h_{i,k}^{g_i}$ for $1 \le i \le n-k-1$. Now we calculate $h_{i,k}^{g_i}$.

For $k \le n-k$
$$
h_{i,k}^{g_i}  = \left\{%
\begin{array}{l}
t_{i+1,i} (s)\,t_{i+k+1,i+k} (s)\, t_{i+k,i}(-y_{i,i+1} s) \,t_{i+k+1, i+1}(y_{i, i+1} s)\\ \;\; \cdot t_{i+2k,i+k}(-y_{i+k, i+k+1} s), \hfill\;\; \mbox{if}~ i < k~\mbox{and}~i+k \le n-k ;\\
t_{i+1,i} (s)\,t_{i+k+1,i+k} (s) \,t_{i+k,i}(-y_{i,i+1} s)\,t_{i+k+1, i+1}(y_{i, i+1} s),\\ \hfill\;\; \mbox{if}~i < k~\mbox{and}~i+k > n-k;\\
t_{i+1,i}(s) \,t_{i+k+1,i+k} (s)\, t_{i+1, i-k+1}(y_{i, i-k+1} s) \,t_{i+k,i}(-y_{i, i+1} s)\\ \;\; \cdot t_{i+k+1, i+1}(y_{i, i+1} s) \,t_{i+2k,i+k}(-y_{i+k, i+k+1} s),\\  \hfill \mbox{if}~k \le i \le n-k~\mbox{and}~k \le i+k \le n-k ;\\
t_{i+1,i}(s) \,t_{i+k+1,i+k} (s) \,t_{i+1, i-k+1}(y_{i, i-k+1} s) \,t_{i+k,i}(-y_{i, i+1} s)\\ \;\; \cdot t_{i+k+1, i+1}(y_{i,i+1} s),  \hfill \mbox{if}~k \le i \le n-k~\mbox{and}~ i+k > n-k,
\end{array}%
\right.
$$
and for $k > n-k$,
$$
h_{i,k}^{g_i} = t_{i+1,i} (s)\, t_{i+k+1,i+k} (s) \,t_{i+k,i}(-y_{i,i+1} s) \, t_{i+k+1,i+1}(y_{i,i+1} s),
$$
for all $i \le  n-k-1$.

Since $\psi(h_{i,k}) = h_{i,k}^{g_i}$, by comparing the corresponding entries of the matrices we get
$$
x_{i, i+1} = y_{i, i+1}, \quad x_{i+k, i+1} = y_{i, i+1}.
$$
Hence $x_{i, i+1} = x_{i+k, i+1}$, which proves that $\psi$ is an inner automorphism of $\Gamma_{n, k+1}$, induced by $g = t_{k,1}(\alpha_{1}) \, t_{k+1,2}(\alpha_{2}) \, \cdots \, t_{n,n-k+1}(\alpha_{n-k+1})$,  where $\alpha_1 = x_{k,1},\; \alpha_2 = x_{1,2},\; \ldots,\;\alpha_i = x_{i-1, i},\; \ldots, \; \alpha_{n-k+1} = x_{n-k,n-k+1}$. We would like to remark here that conjugation by $t_{k,1}(x_{k,1})$ appears only in $\psi(t_{k+1, k}(s))$ and  conjugation by $t_{n, n-k+1}(x_{n-k,n-k+1})$ appears only in $\psi(t_{n-k-1, n-k}(s))$. That is why we have taken  $\alpha_1 = x_{k,1}$ and  $\alpha_{n-k+1} = x_{n-k,n-k+1}$. Since $\psi = \varphi\:\omega$ with $\omega \in \Inn(\Gamma_{n, k+1})$, it follows that $\varphi$ is an inner automorphism of $ \Gamma_{n, k+1}$. As $\varphi$ was an arbitrary class preserving automorphism of $ \Gamma_{n, k+1}$, we have  $\Aut_c(\Gamma_{n, k+1}) = \Inn(\Gamma_{n, k+1})$ for all $k \ge 3$.
\end{proof}

The proof of the following theorem goes on the lines of the proof of Theorem \ref{theorem-new}.
\begin{theorem}
$\Aut_c (\mathrm{UT}_n(\mathbb{Z})) = \Inn(\mathrm{UT}_n (\mathbb{Z}))$, where $\mathbb{Z}$ is the ring of integers and $n$ is any positive integer.
\end{theorem}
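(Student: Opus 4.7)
The strategy is to mirror, almost verbatim, the inductive argument used in the proof of Theorem \ref{theorem-new}, with the prime field $K$ replaced by the ring $\mathbb{Z}$. Since $\mathrm{UT}_n(\mathbb{Z})$ is nilpotent of class exactly $n-1$, we have $\gamma_n(\mathrm{UT}_n(\mathbb{Z})) = 1$, so it suffices to prove that $\Aut_c(\Gamma_{n,k}^{\mathbb{Z}}) = \Inn(\Gamma_{n,k}^{\mathbb{Z}})$ for all $2 \le k \le n$, where $\Gamma_{n,k}^{\mathbb{Z}} := \mathrm{UT}_n(\mathbb{Z})/\gamma_k(\mathrm{UT}_n(\mathbb{Z}))$; the case $k = n$ then recovers the desired statement.

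A first, welcome simplification is that the additive group of $\mathbb{Z}$ is cyclic, generated by $1$, so each subgroup $t_{i+1,i}(\mathbb{Z})$ is infinite cyclic with generator $t_{i+1,i}(1)$. Consequently, the action of any basis conjugating automorphism on $t_{i+1,i}(\mathbb{Z})$ is determined by its value at $t_{i+1,i}(1)$, and no analogue of Lemma \ref{lemma_2.1} or Lemma \ref{lemma_2.2} is needed. Throughout the induction one simply takes $s = 1$ in place of the parameter $s = 1/m$ used in the rational case, which in fact makes the bookkeeping easier than in the proof of Theorem \ref{theorem-new}.

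The inductive scheme is then identical to that of Theorem \ref{theorem-new}. The base case $k = 2$ is abelian and trivial. Given the statement for $k$ and a class preserving automorphism $\varphi$ of $\Gamma_{n,k+1}^{\mathbb{Z}}$, the induced automorphism $\overline{\varphi}$ of $\Gamma_{n,k}^{\mathbb{Z}}$ is inner by induction; composing with an inner lift yields $\psi = \varphi\:\omega$ which is both central and class preserving. Evaluating $\psi$ on the generators $t_{i+1,i}(1)$ and on the products $h_{i,k} := t_{i+1,i}(1) \, t_{i+k+1,i+k}(1)$, $1 \le i \le n-k-1$, then produces, by comparison of matrix entries, equalities of the form $x_{i,i+1} = y_{i,i+1}$ and $x_{i+k,i+1} = y_{i,i+1}$ in $\mathbb{Z}$, which force the conjugating parameters to glue into a single element $g = t_{k,1}(\alpha_{1})\cdots t_{n,n-k+1}(\alpha_{n-k+1})$ of $\Gamma_{n,k+1}^{\mathbb{Z}}$ implementing $\psi$.

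No serious obstacle arises: the unique normal form of elements of $\mathrm{UT}_n(\mathbb{Z})$ as ordered products $\prod_{a>b} t_{a,b}(z_{a,b})$ with $z_{a,b} \in \mathbb{Z}$ guarantees that ``comparing matrix entries'' is a legitimate operation over $\mathbb{Z}$, and the linear constraints that emerge are all plain equalities of integers, automatically solvable in $\mathbb{Z}$. If anything, the mild conceptual point to check is that the reduction ``$\varphi \mapsto$ central $\psi$'' is well-posed over the ring $\mathbb{Z}$; but this uses only the fact that $\Z(\Gamma_{n,k+1}^{\mathbb{Z}})$ is generated by the subgroups $t_{k,1}(\mathbb{Z}), t_{k+1,2}(\mathbb{Z}), \dots, t_{n,n-k+1}(\mathbb{Z})$, which holds identically to the field case.
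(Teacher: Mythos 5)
Your proposal is correct and is essentially the argument the paper intends: the paper gives no separate proof of this theorem, stating only that it ``goes on the lines of the proof of Theorem \ref{theorem-new},'' and your induction on $k$ applied to $\Gamma_{n,k}^{\mathbb{Z}}$ with the case $k=n$ recovering $\mathrm{UT}_n(\mathbb{Z})$ is exactly that. Your observation that the cyclicity of $(\mathbb{Z},+)$ makes Lemmas \ref{lemma_2.1} and \ref{lemma_2.2} unnecessary (one simply takes $s=1$), while the normal form and the absence of zero divisors in $\mathbb{Z}$ keep every entry comparison valid, is the right accounting of what changes from the prime-field case.
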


Now we prove the necessary part of Theorem A.

\begin{theorem}\label{theorem2.7}
Let  $\Gamma_{n, k} = \mathrm{UT}_n(K) / \gamma_k \left(\mathrm{UT}_n(K)\right)$, where  $K$ is not a prime field, $n\geq 3$, $3 \leq k \leq n$. Then $\Aut_c(\Gamma_{n, k}) \neq \Inn(\Gamma_{n, k})$.
\end{theorem}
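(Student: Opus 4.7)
\emph{Proof plan.} Since $K$ is not a prime field, its prime subfield $P\subset K$ (equal to $\mathbb{F}_p$ or $\mathbb{Q}$) satisfies $[K:P]\geq 2$. In particular $\End_P(K,+)$ strictly contains the image of $K$ under its embedding as multiplication operators, so we may choose a $P$-linear additive map $\sigma:K\to K$ that is \emph{not} of the form $x\mapsto ax$ for any single $a\in K$. The plan is to use such a $\sigma$ to build an explicit $\varphi\in \Aut_c(\Gamma_{n,k})\setminus \Inn(\Gamma_{n,k})$.

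Define $\varphi$ on the elementary transvection generators by
\[
\varphi(t_{k,k-1}(x))\;=\;t_{k,k-1}(x)\cdot t_{k,1}(\sigma(x)),\qquad \varphi(t_{i+1,i}(x))\;=\;t_{i+1,i}(x)\quad (i\neq k-1).
\]
Since $t_{k,1}$ is central in $\Gamma_{n,k}$ by the description of $\Z(\Gamma_{n,k})$ recalled in the proof of Theorem \ref{theorem-new}, the perturbing central factor drops out of every Steinberg commutator bracket and so all defining relations of $\Gamma_{n,k}$ are respected; additivity of $\sigma$ handles the additive relation on $t_{k,k-1}(K)$; and $\varphi$ is identity modulo $\Z(\Gamma_{n,k})$, hence bijective. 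So $\varphi$ is a well-defined automorphism.

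To verify that $\varphi$ is class preserving, write any $g \in \Gamma_{n,k}$ as $g=\prod_i t_{i+1,i}(x_i)\cdot c$ with $c\in [\Gamma_{n,k},\Gamma_{n,k}]$, so $\varphi(g)=g\cdot t_{k,1}(\sigma(x_{k-1}))$. When $x_{k-1}\neq 0$ I would set
\[
h\;=\;t_{k-1,1}\!\left(\sigma(x_{k-1})/x_{k-1}\right)\;\in\;\gamma_{k-2}(\Gamma_{n,k}).
\]
The Steinberg relation $[t_{k,k-1}(\alpha),t_{k-1,1}(\beta)]=t_{k,1}(\alpha\beta)$ yields $[t_{k,k-1}(x_{k-1}),h]=t_{k,1}(\sigma(x_{k-1}))$. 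For $i\neq k-1$, the cross bracket $[t_{i+1,i}(x_i),h]$ vanishes, since neither the $k=m$ nor the $i=j$ case of the Steinberg formula applies; moreover $[c,h]\in[\gamma_2,\gamma_{k-2}]\subseteq \gamma_k=1$. Hence $[g,h]=t_{k,1}(\sigma(x_{k-1}))$, so $h^{-1}gh=g\cdot [g,h]=\varphi(g)$, showing $\varphi(g)\in g^{\Gamma_{n,k}}$. The case $x_{k-1}=0$ is trivial since then $\varphi(g)=g$.

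To see $\varphi\notin \Inn(\Gamma_{n,k})$, suppose for contradiction that $\varphi=\Inn(g_0)$ and write $g_0\equiv \prod_i t_{i+1,i}(a_i)$ modulo $[\Gamma_{n,k},\Gamma_{n,k}]$. Expanding $[t_{k,k-1}(x),g_0]$ via iterated Steinberg commutators in the nilpotent group $\Gamma_{n,k}$, the coefficient of $t_{k,1}$ in its central component takes the form $x\cdot P(a_1,\ldots,a_{n-1})$ for a polynomial $P$ determined by the nested bracket structure, hence is $K$-linear in $x$. Equating with $\sigma(x)$ for all $x$ would force $\sigma(x)=c\,x$ with $c=P(a_1,\ldots,a_{n-1})\in K$, contradicting the non-$K$-linearity of $\sigma$.

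The main obstacle is the class-preservation step: the conjugator $h$ must be chosen so that $[g,h]$ hits the desired central target precisely, with no spurious terms. Placing $h$ in the deeper commutator subgroup $\gamma_{k-2}$ is the key device, since it forces the only nonzero Steinberg interaction to be the one with $t_{k,k-1}$, while interactions with the other elementary transvections are trivial and interactions with the commutator part $c$ of $g$ are killed by $[\gamma_2,\gamma_{k-2}]\subseteq \gamma_k=1$.
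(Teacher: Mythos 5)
Your proposal is correct and follows essentially the same route as the paper: the paper also perturbs $t_{k,k-1}$ by a central factor $t_{k,1}(\cdot)$ via a $K_0$-linear but non-$K$-linear additive map (there taken concretely as $e^0\mapsto c$, $e^j\mapsto 0$ on a basis of $K$ over its prime subfield), proves class preservation by conjugating with $t_{k-1,1}(\beta/\alpha_{k-1})$ exactly as you do, and derives non-innerness from the forced $K$-linearity of the $(k,1)$-coefficient of $[t_{k,k-1}(x),g_0]$. Your version with an abstract $\sigma\in\operatorname{End}_{K_0}(K)\setminus K$ is a mild and harmless generalization of the same construction.
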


\begin{proof}
 It is sufficient to construct a non-inner class preserving automorphism of $\Gamma_{n, k}$. Consider $K$ as a vector space over $K_0$, where $K_0$ is its prime subfield, with the basis $\{e^0, e^1, e^2, \ldots \}$, where $e^0 = 1$. Then the group  $t_{i,j} (K)$ is generated by elements $t_{i,j}(e^l)$, $l = 0, 1 ,\ldots$, if $K_0 = \mathbb F_p$ and by elements $t_{i,j}(\frac{1}{q} e^l)$, $l=0,1,\ldots$ and $q$ runs over all primes, if $K_0 = \mathbb Q$.

By Lemma \ref{lemma_2.2}, to define an automorphism $\varphi$ of the group $\mathrm{UT}_n(K)$ of the form given below, it is enough to define its images $\varphi(t_{i+1,i}(e^0))$  and $\varphi(t_{i+1,i}(\frac{1}{q} e^l))$ for $l =  1, 2, \ldots$. Let us consider the following map $\psi$ of $\Gamma_{n, k}$ into itself:
\begin{eqnarray*}
\psi(t_{k,k-1} (e^0)) & = & t_{k,k-1} (e^0) \, t_{k,1} (e^0 c), \cr
\psi(t_{k,k-1} (\frac{1}{q} e^j)) & = & t_{k,k-1} (\frac{1}{q} e^j) ~\mbox{for}~~ j \ge 1, \cr
\psi(t_{i+1,i} (e^0)) & = & t_{i+1,i} (e^0) ~\mbox{for}~~i \neq k-1,\cr
\psi(t_{i+1,i} (\frac{1}{q} e^l)) & = & t_{i+1,i} (\frac{1}{q} e^l) ~\mbox{for}~~i \neq k-1 ~\mbox{and}~ l=1, 2,\ldots,
\end{eqnarray*}
where $c$ is a  non-zero elements of $K$ and $q$ runs over all primes. It is easy to see that $\psi$ is a central automorphism of $\Gamma_{n, k}$.

First we show that $\psi$ is class preserving. Any element $g \in G$ can be represented in the form
$$
g = t_{2,1} (\alpha_1) \cdots t_{k, k-1} (\alpha_{k-1}) \cdots t_{n,n-1} (\alpha_{n-1}) \, d,
$$
where $d \in \gamma_2(\Gamma_{n, k})$. If $\alpha_{k-1} = 0$, then $\psi$ fixes $g$.  So assume that $\alpha_{k-1} \neq 0$. Since the automorphism $\psi$ is central, it fixes every element of $\gamma_2(\Gamma_{n, k})$. Thus by the  definition of $\psi$, we get
$$
\psi (g) = g \, t_{k,1} (\beta)
$$
for some $\beta \in K$, since $t_{k, 1}(\beta) \in \Z(\Gamma_{n, k})$.  Let $h = t_{k-1,1} ( \beta / \alpha_{k-1})$. Then
\begin{eqnarray*}
(t_{k,k-1}(\alpha_{k-1}))^h & = & t_{k-1,1} (- \beta/\alpha_{k-1}) \, t_{k,k-1}(\alpha_{k-1}) \, t_{k-1,1}
( \beta / \alpha_{k-1}) \\ & = & t_{k,k-1}(\alpha_{k-1}) \,  t_{k,1} (\beta)
\end{eqnarray*}
and for any $u \in K$,
$$(t_{r,s}(u))^h = t_{r,s}(u)~\mbox{if}~~ (r,s) \neq (k,k-1).$$
Therefore, $\psi(g) = g^h$, which shows that $\psi$ is class preserving.

Now we show that $\psi$ is not inner. Let us assume the contrary, and therefore there must exist some  $h \in \Gamma_{n, k}$ such that
\begin{eqnarray}
(t_{k,k-1} (e^0))^h & = & t_{k,k-1} (e^0) \, t_{k,1} (e^0 c),  \label{eq10}\\
(t_{k,k-1} (\frac{1}{q} e^j))^h & = & t_{k,k-1} (\frac{1}{q} e^j) ~\mbox{for}~~ j \ge 1, \label{eq11}\\
(t_{i+1,i} (\frac{1}{q} e^l))^h & = & t_{i+1,i} (\frac{1}{q} e^l) ~\mbox{for}~~i \neq k-1 ~\mbox{and}~ l=0,1,\ldots, \label{eq12}
\end{eqnarray}
where $q$ runs over all primes. Taking into account \eqref{eq10} and \eqref{eq12}, notice that the only possibility for the value of $h$ is $t_{k-1,1}(c)$ modulo $\Z(\Gamma_{n, k})$. Now by inserting the value of $h$ in \eqref{eq11}, we get
$$
(t_{k,k-1} (\frac{1}{q} e^j)) t_{k,1}(\frac{1}{q} e^j c)  =  t_{k,k-1} (\frac{1}{q} e^j) ~\mbox{for}~~ j \ge 1 ~\mbox{ and each prime}~~q.
$$
Since $e^j \neq 0$, it is possible only when $c = 0$, which contradicts our choice of $c$ being non-zero.  Hence $\psi$ is not inner. This completes the proof of the theorem.
\end{proof}

\begin{cor}
Let $\Gamma_{n, k} = \mathrm{UT}_n(\mathbb{Q}[x])/\gamma_k(\mathrm{UT}_n(\mathbb{Q}[x]))$. Then  $\Aut_c(\Gamma_{n, k}) \neq \Inn (\Gamma_{n, k})$, where  $n\geq 3$, $3 \leq k \leq n$.
\end{cor}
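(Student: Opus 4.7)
The plan is to adapt the construction in the proof of Theorem \ref{theorem2.7}, exploiting that $\mathbb{Q}[x]$, although not a field, has infinite $\mathbb{Q}$-dimension with basis $\{1, x, x^2, \dots\}$; this $\mathbb{Q}$-basis plays the role that the $K_0$-basis of $K$ did in Theorem \ref{theorem2.7}. Fix a nonzero $c \in \mathbb{Q}[x]$ and define a map $\psi$ on the generating subgroups of $\Gamma_{n,k}$ by
\[
\psi(t_{k,k-1}(1)) = t_{k,k-1}(1) \cdot t_{k,1}(c), \qquad \psi\bigl(t_{k,k-1}(\tfrac{1}{q} x^j)\bigr) = t_{k,k-1}(\tfrac{1}{q} x^j)\;\;\text{for } j \ge 1,
\]
together with $\psi(t_{i+1,i}(\alpha)) = t_{i+1,i}(\alpha)$ for all $i \ne k-1$ and all $\alpha \in \mathbb{Q}[x]$. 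Since $t_{k,1}(c) \in \Z(\Gamma_{n,k})$ commutes with every generator, an analogue of Lemma \ref{lemma_2.2} over $\mathbb{Q}[x]$ extends this prescription consistently to a central automorphism of $\Gamma_{n,k}$.

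To show $\psi$ is class-preserving, I would write an arbitrary $g \in \Gamma_{n,k}$ in the normal form $g = t_{2,1}(\alpha_1) \cdots t_{n,n-1}(\alpha_{n-1}) \cdot d$ with $d \in \gamma_2(\Gamma_{n,k})$. The centrality of $\psi$ together with its triviality on $\gamma_2$ yields $\psi(g) = g \cdot t_{k,1}(\beta)$ for a $\beta \in \mathbb{Q}[x]$ determined by the ``constant component'' of $\alpha_{k-1}$ in the chosen basis. I would then exhibit $h \in \Gamma_{n,k}$ of the form $t_{k-1,1}(\gamma)$ with $\gamma\, \alpha_{k-1} = \beta$ so that $g^h = \psi(g)$. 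For non-innerness, suppose $\psi = \mathrm{conj}_{h_0}$; the constraints $\psi(t_{k,k-1}(\tfrac{1}{q}x^j)) = t_{k,k-1}(\tfrac{1}{q}x^j)$ for $j \ge 1$ together with $\psi|_{t_{i+1,i}(\mathbb{Q}[x])} = \mathrm{id}$ for $i \ne k-1$ pin down $h_0$ modulo $\Z(\Gamma_{n,k})$, and the remaining equation $\psi(t_{k,k-1}(1)) = t_{k,k-1}(1)\, t_{k,1}(c)$ then forces $c = 0$, contradicting our choice.

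The main obstacle is the class-preservation step: in the field setting of Theorem \ref{theorem2.7} one simply takes $\gamma = \beta / \alpha_{k-1}$, which makes sense in a field but not in the ring $\mathbb{Q}[x]$. Resolving this requires either a judicious choice of $c$ so that the divisibility $\alpha_{k-1} \mid \beta$ holds whenever it needs to, or replacing the single conjugator $t_{k-1,1}(\gamma)$ by a longer element of $\Gamma_{n,k}$ whose commutator with $g$ contributes additional $t_{k,1}$-terms that compensate for the failure of division. I expect this refinement, rather than the (straightforward) bookkeeping of basis expansions, to be the crux of the argument.
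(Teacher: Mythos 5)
Your plan follows the paper's own argument for this corollary, which is literally a one-line remark: view $\mathbb{Q}[x]$ as a $\mathbb{Q}$-vector space with basis $1, x, x^2, \ldots$ and repeat the construction of Theorem \ref{theorem2.7}. You have also correctly isolated the step that does not transfer, namely producing the conjugator $h = t_{k-1,1}(\beta/\alpha_{k-1})$, which requires division in the coefficient ring. The problem is that neither of your proposed repairs can succeed. No ``judicious choice of $c$'' exists: take $g = t_{k,k-1}(1+\lambda x)$ with $\lambda \in \mathbb{Q}$, so that the constant component of $\alpha_{k-1}$ is $1$ and $\psi(g) = g\, t_{k,1}(c)$. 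For \emph{any} $h \in \Gamma_{n,k}$ one computes $[g,h] = {\mathbf 1_n} + (1+\lambda x)N$ for some matrix $N$ over $\mathbb{Q}[x]$, so every entry of $g^{-1}g^h$ --- in particular its $t_{k,1}$-component --- lies in the ideal $(1+\lambda x)$. Class preservation therefore forces $(1+\lambda x) \mid c$ for every $\lambda \in \mathbb{Q}$, and since these polynomials are pairwise coprime, $c = 0$. The same computation disposes of your second fix: the set of $t_{k,1}$-deviations reachable by conjugating $t_{k,k-1}(\alpha)$ is exactly the ideal $\alpha\,\mathbb{Q}[x]$ no matter how long the conjugator is, so the obstruction lives in the conjugacy class itself, not in your choice of $h$.

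Worse, the obstruction rules out the whole strategy, not just this particular $\psi$. Any class-preserving automorphism must send $t_{k,k-1}(\alpha)$ to $t_{k,k-1}(\alpha)$ times a deviation whose $t_{k,1}$-component $f(\alpha)$ lies in $\alpha\,\mathbb{Q}[x]$ for every $\alpha$, and a short computation shows $f$ is additive, hence $\mathbb{Q}$-linear. Writing $f(x^n) = x^n c_n$ and applying the divisibility condition to $\alpha = 1 + \lambda x^n$ gives $(1+\lambda x^n) \mid (c_0 - c_n)$ for all $\lambda \in \mathbb{Q}^{\ast}$, forcing $c_n = c_0$ for all $n$; thus $f$ is multiplication by the fixed polynomial $c_0$, which is precisely the effect of an inner automorphism. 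For $n = k = 3$ (where $\Gamma_{3,3} = \mathrm{UT}_3(\mathbb{Q}[x])$ has class $2$ and the bookkeeping is immediate) this argument applies to both generating subgroups $t_{2,1}(\mathbb{Q}[x])$ and $t_{3,2}(\mathbb{Q}[x])$ and yields $\Aut_c(\mathrm{UT}_3(\mathbb{Q}[x])) = \Inn(\mathrm{UT}_3(\mathbb{Q}[x]))$, contradicting the corollary at $(n,k) = (3,3)$. So the difficulty you flagged is not a refinement to be supplied later but a genuine failure of the method over $\mathbb{Q}[x]$; it affects the paper's own proof equally, and the statement of the corollary itself needs to be re-examined. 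The essential difference from Theorem \ref{theorem2.7} is that there $K$ is a field, so the ideal $(\alpha_{k-1})$ is all of $K$ and the divisibility condition is vacuous.
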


\begin{proof}
We can consider  $\mathbb{Q}[x]$ as an infinite dimensional vector space over $\mathbb{Q}$ with basis $e^0 = 1, e^1 = x, e^2 = x^2, \ldots$. Thus the proof follows on the lines of the proof of  Theorem \ref{theorem2.7}.
\end{proof}

We remark that the case $n=3$ of Theorem A was proved by another techniques in \cite{Garge}.

\section{Quotient groups of unitriangular groups \\ over finite fields}

Throughout this section $K$ always denotes the field $\mathbb{F}_{p^m}$, where $m$ is a positive integer. Let us denote by $G_n^{(m)}$, $n \geqslant 3$, $m \geqslant 1$, the quotient group:
$$
\label{eqn2}
G_n^{(m)} = \mathrm{UT}_n(\mathbb{F}_{p^m}) / \gamma_3 \left( \mathrm{UT}_n(\mathbb{F}_{p^m}) \right).
$$
Obviously $G_n^{(m)}$ is a nilpotent group of class $2$.

The group $G_3^{(1)}$ is known as the \emph{Heisenberg group modulo $p$}. It is a group of order $p^3$ with two generators $x, y$ (say) and relations
$$
z = x^{-1} y^{-1} x y , \qquad x^p = y^p = z^p = 1, \qquad x z = z x, \qquad y z = z y .
$$
It is easy to check that $|G_n^{(1)}| = p^{2n-3}$ and $|Z(G_n^{(1)})| = p^{n-2}$, so
$$
|\Inn(G_n^{(1)})| = |G_n^{(1)} / Z(G_n^{(1)})| = p^{n-1} \quad  \mathrm{and} \quad |\mathrm{Cb}(G_n^{(1)})| = p^{(n-1)(n-2)}.
$$

We remark that $G_3^{(2)}$ is the group constructed by W. Burnside, which is mentioned in the introduction.

Let us represent the elements of the field $\mathbb{F}_{p^m}$ in the following form:
$$
\mathbb{F}_{p^m} = \{ a_0 \cdot 1 + a_1 \cdot \theta + a_2 \cdot
\theta^2 + \cdots + a_{n-1} \cdot \theta^{m-1} ~|~ a_i  \in
\mathbb{F}_{p} \},
$$
where $\theta$ is a root of the minimal irreducible polynomial over~$\mathbb{F}_p$. Then $G_n^{(m)}$ is generated by the transvections
$$
t_{i+1,i}(\theta^k), \quad i = 1, \ldots, n-1, \quad k=0, \ldots ,
m-1
$$
of order $p$. The commutator of  $t_{\alpha, \beta} (\theta^k)$ and $t_{\gamma, \delta} (\theta^{\ell})$ for $\alpha > \beta$, $\gamma > \delta$ is defined as follows:
$$
[t_{\alpha, \beta} (\theta^k), t_{\gamma, \delta} (\theta^{\ell})] = \left\{%
\begin{array}{lc}
t_{\alpha, \delta}(\theta^{k+\ell}), & \textrm{if} \, \beta = \gamma
, \\
t_{\gamma, \beta} (- \theta^{k + \ell}), & \textrm{if} \, \alpha  = \delta ,\\ {\mathbf 1_n}, & \textrm{otherwise} .
\end{array}%
\right.
$$

Here and below we use the same notations for transvections in $\mathrm{UT}_n(\mathbb{F}_{p^m})$ and for their images in $G_n^{(m)}$.

A group $G$ is said to be \emph{Camina group} if $x^G = x\gamma_2(G)$ for all $x \in G \setminus \gamma_2(G)$. The following lemma can also be derived as a corollary of \cite[Theorem 5.4]{mY06} by noticing that $G_3^{(m)}$ is a Camina special $p$-group.  But we here give a direct simple proof.

\begin{lemma} \label{lemma_n=3}
Any basis conjugating automorphism of $G_3^{(m)}$, $m \geqslant 1$, is class preserving, i.e., $\mathrm{Cb}(G_3^{(m)}) = \Aut_c(G_3^{(m)})$.
\end{lemma}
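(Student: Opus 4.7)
The plan is to exploit two structural properties of $G = G_3^{(m)}$: it has nilpotency class two with $Z(G) = \gamma_2(G)$ generated by the transvections $t_{3,1}(\theta^k)$, and, more crucially, it is a Camina group, i.e., $x^G = xZ(G)$ for every $x \notin Z(G)$. The Camina property will be verified by a direct centralizer calculation: writing a noncentral $x = t_{2,1}(\alpha)\,t_{3,2}(\beta)\,t_{3,1}(\gamma)$ with $(\alpha,\beta) \neq (0,0)$ and $g = t_{2,1}(u)\,t_{3,2}(v)\,t_{3,1}(w)$, bimultiplicativity of the commutator in class two gives $[x,g] = t_{3,1}(\beta u - \alpha v)$, so $C_G(x)$ is cut out by the single non-trivial $\mathbb{F}_{p^m}$-linear equation $\beta u = \alpha v$; hence $|C_G(x)| = p^{2m}$, $|x^G| = p^m = |Z(G)|$, and since $x^G \subseteq x\,[G,G] = xZ(G)$ automatically in class two, equality follows.

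Given this, let $\varphi \in \mathrm{Cb}(G_3^{(m)})$, so $\varphi(t_{i+1,i}(\theta^k)) = t_{i+1,i}(\theta^k)\,z_{i,k}$ with $z_{i,k} \in Z(G)$ for $i=1,2$ and $0 \le k \le m-1$. The first step is to show $\varphi$ fixes $Z(G)$ pointwise. Because $Z(G)$ is generated by the commutators $[t_{3,2}(\theta^k), t_{2,1}(\theta^\ell)] = t_{3,1}(\theta^{k+\ell})$, it is enough to see that $\varphi$ preserves each such commutator; but in any nilpotent group of class two the commutator is invariant under multiplication of its arguments by central elements, so $[\varphi(t_{3,2}(\theta^k)), \varphi(t_{2,1}(\theta^\ell))] = [t_{3,2}(\theta^k), t_{2,1}(\theta^\ell)]$, which yields $\varphi|_{Z(G)} = \mathrm{id}$.

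The second step is to show $\varphi(x) \in xZ(G)$ for every $x \in G$. Using the $\mathbb{F}_{p^m}$-additivity $t_{i+1,i}(\alpha+\beta) = t_{i+1,i}(\alpha)\,t_{i+1,i}(\beta)$ together with the fact that each $z_{i,k}$ is central, one expands $\varphi(t_{2,1}(\alpha)) = t_{2,1}(\alpha)\,z'$ and $\varphi(t_{3,2}(\beta)) = t_{3,2}(\beta)\,z''$ with $z',z'' \in Z(G)$; writing $x = t_{2,1}(\alpha)\,t_{3,2}(\beta)\,t_{3,1}(\gamma)$ in normal form and commuting the central factors past, one gets $\varphi(x) = x\cdot z_x$ for some $z_x \in Z(G)$. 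Combined with the Camina property, this gives $\varphi(x) \in xZ(G) = x^G$ when $x \notin Z(G)$; and when $x \in Z(G)$, step one yields $\varphi(x) = x \in x^G = \{x\}$. Either way $\varphi \in \Aut_c(G_3^{(m)})$, which is the desired inclusion $\mathrm{Cb}(G_3^{(m)}) \subseteq \Aut_c(G_3^{(m)})$; the reverse inclusion is immediate from the definition of $\mathrm{Cb}$.

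The only genuine obstacle is verifying the Camina property, and that reduces to the centralizer computation above; everything else is bookkeeping in a class-two group, made routine by the normal form and the centrality of $Z(G)$. I would not bother proving $\varphi|_{Z(G)} = \mathrm{id}$ separately if the lemma only asked about noncentral elements, but since $x^G = \{x\}$ for $x \in Z(G)$ one does need this pointwise fixing, which is why the class-two commutator-invariance argument deserves an explicit line.
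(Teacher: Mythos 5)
Your proof is correct, and it takes precisely the route the paper mentions but deliberately sidesteps: immediately before the lemma the authors remark that it ``can also be derived as a corollary of \cite[Theorem 5.4]{mY06} by noticing that $G_3^{(m)}$ is a Camina special $p$-group,'' and then give a direct computation instead. Your argument is a self-contained version of that Camina route: you verify $x^G = x\Z(G)$ for noncentral $x$ by the commutator/centralizer count, note that a basis conjugating automorphism of this group is central and fixes $\Z(G)=\gamma_2(G)$ pointwise, and conclude. The paper's proof is, at bottom, the same linear-algebra fact in different packaging: for $h = t_{2,1}(\lambda_1)\,t_{3,2}(\lambda_2)\,t_{3,1}(\mu_1)$ it computes $\varphi(h) = h\,t_{3,1}(-P+Q)$ and $h^g = h\,t_{3,1}(\alpha_1\lambda_2-\alpha_2\lambda_1)$ and solves $\alpha_1\lambda_2-\alpha_2\lambda_1=-P+Q$ for $(\alpha_1,\alpha_2)$, which is exactly the surjectivity of your functional $(u,v)\mapsto \beta u-\alpha v$. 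What your version buys is a quotable intermediate fact (the Camina property) and explicit handling of the central elements --- the paper's ``one can always find $\alpha_1,\alpha_2$'' silently relies on $P=Q=0$ when $\lambda_1=\lambda_2=0$, which your step on $\varphi|_{\Z(G)}=\mathrm{id}$ makes visible. What the paper's version buys is brevity: one normal-form computation with no centralizer bookkeeping.
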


\begin{proof}
The group $G_3^{(m)}$ is generated by $2m$ elements $t_{2,1} (\theta^k)$ and $t_{3,2} (\theta^k)$, where $k=0, \ldots, m-1$.
Notice that any automorphism $\varphi \in \textrm{Cb}( G_3^{(m)})$ acts on the generators as follows:
\begin{eqnarray*}
\varphi (t_{2,1} (\theta^k)) & = & t_{2,1} (\theta^k) \, t_{3,1} (-y^{(k)}),  \cr \varphi (t_{3,2} (\theta^k)) & = & t_{3,2} (\theta^k) \, t_{3,1} (x^{(k)})
\end{eqnarray*}
for some $x^{(k)}, y^{(k)} \in \mathbb F_{p^m}$, where $k=0, \ldots, m-1$. It is easy to see that $\varphi$ is a central automorphism of $G_3^{(m)}$. The automorphism $\varphi$ is class preserving if and only if for each $h \in G_3^{(m)}$ there exists an element $g$ such that $\varphi(h) = h^{g}$. Let $h = t_{2,1} (\lambda_1) \, t_{3,2} (\lambda_2) \, t_{3,1} (\mu_1)$ be an arbitrary element of $G_3^{(m)}$. Since $\varphi$ is a central automorphism, we have
\begin{eqnarray*}
\varphi(t_{2,1} (\lambda_1)) & = & t_{2,1} (\lambda_1) \, t_{3,1} (- P), \cr \varphi(t_{3,2}) (\lambda_2) & = & t_{3,2} (\lambda_2) \, t_{3,1} (Q)
\end{eqnarray*}
for some $P, Q \in \mathbb F_{p^m}$. Thus
$$
\varphi(h) = t_{2,1} (\lambda_1) \, t_{3,2} (\lambda_2) \, t_{3,1} (-P + Q + \mu_1).
$$
Suppose that there exists $g \in G_3^{(m)}$ such that $\varphi(h) = h^{g}$. The element $g$ can be represented in its normal form as
$g = t_{2,1} (\alpha_1) \, t_{3,2} (\alpha_2) \, t_{3,1} (\beta_1)$ for some $\alpha_1, \,\alpha_2,\,\beta_1 \in \mathbb{F}_{p^m}$.
Consider the conjugation of $g$ on the generators of $G_3^{(m)}$, which is given by
\begin{eqnarray*}
(t_{2,1} (\theta^k))^{g} & = & t_{2,1} (\theta^k) \, t_{3,1} (-\alpha_2 \theta^k),  \cr (t_{3,2} (\theta^k))^{g} & = & t_{3,2} (\theta^k) \, t_{3,1} (\alpha_1 \theta^k).
\end{eqnarray*}
Therefore
$$
h^{g} = t_{2,1} (\lambda_1) \, t_{3,2} (\lambda_2) \, t_{3,1} (\alpha_1 \lambda_2 - \alpha_2 \lambda_1 + \mu_1).
$$
Thus $\varphi(h) = h^{g}$ if and only if
$$
\alpha_1 \lambda_2 - \alpha_2 \lambda_1 = - P + Q.
$$
But for any given $P$, $Q$ (depending on $\varphi$) and for any given $\lambda_1$, $\lambda_2$ (depending on $h$), one can always find $\alpha_1$ and $\alpha_2$ such that $\alpha_1 \lambda_2 - \alpha_2 \lambda_1 = - P + Q$. Hence any basis conjugating automorphism of $G_3^{(m)}$ is class preserving, which completes the proof of the lemma.
\end{proof}

Now we prove Theorem B.

\begin{theorem}[Theorem B] \label{theorem_3.1}
Let $G_n^{(m)}$ be the group defined above. Then $\Aut_c (G_n^{(m)})$ is elementary abelian $p$-group of order $p^{2m^2+mn-3m}$. Moreover, the order of  $\Out_c(G_n^{(m)})$ is $p^{2m(m-1)}$, which is independent of $n$.
\end{theorem}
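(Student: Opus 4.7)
\medskip

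The plan is to show that every $\varphi\in\Aut_c(G_n^{(m)})$ is a central automorphism fixing $\Z(G_n^{(m)})$ pointwise, parametrize such $\varphi$ by additive maps $f_{i,j}\colon\mathbb F_{p^m}\to\mathbb F_{p^m}$, translate the class-preserving condition into a linear system whose solvability pins down the $f_{i,j}$, and then count. Since $G_n^{(m)}$ has nilpotency class two with $\Z(G_n^{(m)})=\gamma_2(G_n^{(m)})$, any $\varphi\in\Aut_c(G_n^{(m)})$ induces the identity on $G_n^{(m)}/\Z(G_n^{(m)})$ and fixes every central element (since $z^g=z$). Hence
$$\varphi(t_{i+1,i}(\lambda))=t_{i+1,i}(\lambda)\prod_{j=1}^{n-2}t_{j+2,j}(f_{i,j}(\lambda)),$$
and additivity of $t_{i+1,i}(\,\cdot\,)$ forces each $f_{i,j}$ to be $\mathbb F_p$-linear.

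Writing $g=\prod_i t_{i+1,i}(\lambda_i)\cdot c$ and $h=\prod_i t_{i+1,i}(\mu_i)\cdot c'$ with $c,c'\in\Z(G_n^{(m)})$, the commutator identities of Section 3 yield
$$g^{-1}\varphi(g)=\prod_{j=1}^{n-2}t_{j+2,j}\!\Bigl(\sum_i f_{i,j}(\lambda_i)\Bigr),\qquad [g,h]=\prod_{j=1}^{n-2}t_{j+2,j}(\lambda_{j+1}\mu_j-\lambda_j\mu_{j+1}).$$
Thus $\varphi\in\Aut_c(G_n^{(m)})$ iff for every $(\lambda_1,\dots,\lambda_{n-1})$ there exists $(\mu_1,\dots,\mu_{n-1})$ solving $\sum_i f_{i,j}(\lambda_i)=\lambda_{j+1}\mu_j-\lambda_j\mu_{j+1}$ for all $j=1,\dots,n-2$. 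Putting a single $\lambda_i\neq 0$ with $i\notin\{j,j+1\}$ and all other $\lambda_k=0$ immediately forces $f_{i,j}\equiv 0$, so only $f_{j,j}$ and $f_{j+1,j}$ can survive.

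The heart of the argument is then: for each $1\le j\le n-3$, take $\lambda_j,\lambda_{j+2}$ nonzero and all other $\lambda_k$ (including $\lambda_{j+1}$) equal to zero. The $j$-th equation forces $\mu_{j+1}=-f_{j,j}(\lambda_j)/\lambda_j$, and the $(j+1)$-st forces $\mu_{j+1}=f_{j+2,j+1}(\lambda_{j+2})/\lambda_{j+2}$. Since these must agree for every nonzero choice, each ratio is a constant $c_{j+1}\in\mathbb F_{p^m}$, giving $f_{j,j}(\lambda)=-c_{j+1}\lambda$ and $f_{j+2,j+1}(\lambda)=c_{j+1}\lambda$ (so in particular both are $\mathbb F_{p^m}$-linear). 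The only components that escape this pairing are the \emph{boundary} maps $f_{2,1}$ and $f_{n-2,n-2}$, which remain arbitrary $\mathbb F_p$-linear endomorphisms of $\mathbb F_{p^m}$. Conversely, given any such data, setting $\mu_k=c_k$ for $2\le k\le n-2$ reduces the system to the two boundary equations, which one solves explicitly by $\mu_1=f_{2,1}(\lambda_2)/\lambda_2$ and $\mu_{n-1}=-f_{n-2,n-2}(\lambda_{n-2})/\lambda_{n-2}$ (with free choice where the denominators vanish); thus the conditions are also sufficient.

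Counting parameters gives
$$|\Aut_c(G_n^{(m)})|=|\mathbb F_{p^m}|^{n-3}\cdot|\mathrm{End}_{\mathbb F_p}(\mathbb F_{p^m})|^{2}=p^{m(n-3)}\cdot p^{2m^2}=p^{2m^2+mn-3m}.$$
Because every $\varphi\in\Aut_c(G_n^{(m)})$ fixes $\Z(G_n^{(m)})$ pointwise, composing two such automorphisms corresponds to adding the parameter tuples $(c_\bullet,f_{2,1},f_{n-2,n-2})$ inside an $\mathbb F_p$-vector space, so $\Aut_c(G_n^{(m)})$ is elementary abelian. A direct calculation of $(t_{i+1,i}(\lambda))^g$ for $g=\prod_j t_{j+1,j}(\mu_j)$ shows that $\Inn(G_n^{(m)})$ corresponds precisely to the parameters with $c_k=\mu_k$ ($2\le k\le n-2$) together with $f_{2,1}$ and $f_{n-2,n-2}$ being $\mathbb F_{p^m}$-linear (namely multiplications by $\mu_1$ and $-\mu_{n-1}$). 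Quotienting leaves only the ``non-$\mathbb F_{p^m}$-linear'' parts of $f_{2,1}$ and $f_{n-2,n-2}$, each contributing $p^{m^2-m}$ cosets, whence $|\Out_c(G_n^{(m)})|=p^{2m(m-1)}$ is independent of $n$. The main obstacle will be the careful case analysis pinning down the pairings $f_{j,j}\leftrightarrow f_{j+2,j+1}$ and correctly isolating $f_{2,1}$ and $f_{n-2,n-2}$ as the unconstrained boundary pieces, since it is precisely these two endomorphisms, modulo $\mathbb F_{p^m}$-multiplications, that account for the entirety of $\Out_c(G_n^{(m)})$ and thereby explain its independence of $n$.
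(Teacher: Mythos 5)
Your argument is correct, and it reorganizes the proof in a way that differs from the paper's in a useful respect. The necessity half is essentially the paper's: your test vectors with $\lambda_j,\lambda_{j+2}\neq 0$ and $\lambda_{j+1}=0$ are exactly the paper's elements $h_i^{p,q}=t_{i+1,i}(\theta^p)\,t_{i+3,i+2}(\theta^q)$, and your conclusion that $f_{j,j}$ and $f_{j+2,j+1}$ are both multiplication by a common scalar $c_{j+1}$ is the paper's relation $y_i^{(p)}\theta^{-p}=x_{i+2}^{(q)}\theta^{-q}$, stated basis-free. Where you diverge is the sufficiency half and the computation of $\Out_c$: the paper proves the matching lower bound by exhibiting explicit class-preserving automorphisms $\phi^{k,\ell}$, $\psi^{k,\ell}$ supported on the two corner $3\times 3$ blocks (whose class-preserving property is delegated to a separate lemma that $\mathrm{Cb}(G_3^{(m)})=\Aut_c(G_3^{(m)})$, i.e.\ the Camina property of $G_3^{(m)}$) together with enough inner automorphisms, and then obtains $|\Out_c|$ by dividing orders. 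You instead solve the bilinear system $\sum_i f_{i,j}(\lambda_i)=\lambda_{j+1}\mu_j-\lambda_j\mu_{j+1}$ explicitly for $\mu$, which gives necessity and sufficiency in one pass, dispenses with the auxiliary lemma, and identifies $\Inn$ inside the parameter space as the tuples with $f_{2,1}$ and $f_{n-2,n-2}$ given by $\mathbb F_{p^m}$-scalar multiplication; this makes the isomorphism type $\Out_c\cong\bigl(\mathrm{End}_{\mathbb F_p}(\mathbb F_{p^m})/\mathbb F_{p^m}\bigr)^2$, and hence the independence of $n$, structurally transparent rather than merely numerical. The only step you leave implicit is that each admissible tuple $(c_\bullet,f_{2,1},f_{n-2,n-2})$ actually defines an automorphism; this is routine (a central map $g\mapsto g\,z(\bar g)$ with $z\colon G/\gamma_2\to\Z(G)$ a homomorphism is an injective, hence bijective, endomorphism of the finite group), but you should say it, since the sufficiency count depends on it.
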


\begin{proof}
As mentioned above, $G_n^{(m)}$ is generated by $\{t_{i+1,i}(\theta^k) \mid 1 \le i \le n-1, \;0 \le k \le m-1\}$.
Observe that
\begin{eqnarray*}
(t_{\beta+1, \beta}(y))^{t_{\alpha+1, \alpha} (x)} & = &
t_{\beta+1, \beta}(y) \, [t_{\beta+1, \beta}(y), t_{\alpha+1, \alpha}(x)] \\
& = &
\left\{%
\begin{array}{lc}
t_{\beta+1, \beta}(y) \, t_{\beta+1, \beta-1} (yx) , & \text{if} \,
\beta= \alpha + 1, \\
t_{\beta+1, \beta}(y) \, t_{\beta+2, \beta} (-yx) , & \text{if} \,
\beta = \alpha - 1, \\ t_{\beta+1, \beta} (y), & \text{otherwise}.
\end{array}\nonumber
\right.
\end{eqnarray*}

Let $g \in G_n^{(m)}$ be an arbitrary element. Then it can be written in the normal form:
$$
\begin{gathered}
g = t_{2,1}(\alpha_1) t_{3,2}(\alpha_2) \cdots
t_{n,n-1}(\alpha_{n-1}) t_{3,1}(\beta_1) t_{4,2}(\beta_2) \cdots
t_{n,n-2}(\beta_{n-2}).
\end{gathered}
$$
Since $G_n^{(m)}$ is nilpotent of class 2, for any $j=1, \ldots,
n-2$, the element $t_{j+2,j}(\beta_j)$ lies in the center of $G_n^{(m)}$. So while considering the conjugation action by $g$, we can assume, without loss of any generality, that
$$
g = t_{2,1}(\alpha_1) \, t_{3,2}(\alpha_2) \, \cdots \,
t_{n,n-1}(\alpha_{n-1}).
$$
Thus $g$ acts on the generators of $G_n^{(m)}$ as follows:
\begin{eqnarray*}
(t_{2,1}(\theta^k))^g & = & t_{2,1} (\theta^k) \, t_{3,1}(-\alpha_2
\theta^k), \\
(t_{3,2}(\theta^k))^g & = & t_{3,2} (\theta^k) \, t_{3,1}(\alpha_1
\theta^k) \, t_{4,2}(-\alpha_3 \theta^k), \\
&\vdots&\\
(t_{i+1,i}(\theta^k))^g & = & t_{i+1,i} (\theta^k)  \,
t_{i+1,i-1}(\alpha_{i-1} \theta^k) \, t_{i+2,i}(-\alpha_{i+1}
\theta^k), \\
&\vdots&\\
(t_{n-1,n-2}(\theta^k))^g & = & t_{n-1,n-2} (\theta^k)  \,
t_{n-1,n-3}(\alpha_{n-3} \theta^k) \, t_{n,n-2}(-\alpha_{n-2}
\theta^k), \\ (t_{n,n-1}(\theta^k))^g & = & t_{n,n-1} (\theta^k) \,
t_{n,n-2}(\alpha_{n-2} \theta^k) ,
\end{eqnarray*}
where $k=0, \ldots, m-1$ and $\alpha_i \in \mathbb F_{p^m}$ for
$i=1, \ldots, n-1$.

Let $\varphi$ be a basis conjugating automorphism $G_n^{(m)}$. Then $\varphi$ can be defined on the generating elements of $G_n^{(m)}$ as follows:
\begin{eqnarray*}
\varphi (t_{2,1}(\theta^k)) & = & t_{2,1} (\theta^k) \,
t_{3,1}(-y_1^{(k)}), \\
\varphi (t_{3,2}(\theta^k)) & = & t_{3,2} (\theta^k) \,
t_{3,1}(x_2^{(k)}) \, t_{4,2}(-y_2^{(k)}), \\
&\vdots&\\
\varphi (t_{i+1,i}(\theta^k)) & = & t_{i+1,i} (\theta^k) \,
t_{i+1,i-1}(x_i^{(k)}) \, t_{i+2,i}(-y_i^{(k)}), \\
&\vdots&\\
\varphi (t_{n-1,n-2}(\theta^k)) & = & t_{n-1,n-2} \, (\theta^k)
t_{n-1,n-3}(x_{n-2}^{(k)}) \, t_{n,n-2}(-y_{n-2}^{(k)}), \\ \varphi
(t_{n,n-1}(\theta^k)) & = & t_{n,n-1} (\theta^k) \,
t_{n,n-2}(x_{n-1}^{(k)}),
\end{eqnarray*}
for some elements $x_i^{(k)}$ and $y_j^{(k)}$ of $\mathbb F_{p^m}$, where  $i=2,\ldots,n-1$, $j=1,\ldots,n-2$, and $k=0,\ldots,m-1$.

This shows that
$$
|\Aut_c(G_n^{(m)})| \le |\mathrm{Cb}(G_n^{(m)})| \le p^{m^2(2n-4)},
$$
since each $x_i$ as well as $y_j$ can take at most $p^m$ values in $\mathbb F_{p^m}$. We are now going to show that if $\varphi$ is a class preserving automorphism of  $G_n^{(m)}$, then many of $x_i^{(k)}$ and $y_j^{(k)}$ depend on one another. So assume that $\varphi$ is a class preserving automorphism of  $G_n^{(m)}$. Let us consider the elements
$$
h_i^{p,q} = t_{i+1,i}(\theta^p) \, t_{i+3,i+2} (\theta^q),
$$
where $p,q \in \{0,1,\ldots, m-1\}$ and $i = 1, \ldots, n-3$.

Then
$$
\varphi(h_1^{p,q})  =  t_{2, 1} (\theta^p) \, t_{4, 3} (\theta^q)  \,  t_{3,1}  (-y_1^{(p)}) \, t_{4,2} (x_3^{(q)}) \, t_{5,3} (-y_3^{(q)}), $$
\begin{eqnarray*}
\varphi(h_i^{p,q}) & =  & t_{i+1,i} (\theta^p)  \, t_{i+3,i+2} (\theta^q)  \,  t_{i+1, i-1} (x_i^{(p)})  \\ & &  t_{i+2, i} (-y_i^{(p)})   \, t_{i+3,i+1} (x_{i+2}^{(q)}) \, t_{i+4,i+2} (-y_{i+2}^{(q)})
\end{eqnarray*}
for $1< i < n-3$, and
$$
\varphi(h_{n-3}^{p,q})  =  t_{n-2, n-3} (\theta^p) \, t_{n,n-1} (\theta^q)  \,  t_{n-2,n-4}  (x_{n-3}^{(p)}) \, t_{n-1,n-3} (-y_{n-3}^{(p)}) \, t_{n,n-2} (x_{n-1}^{(q)}).
$$

Since $\varphi$ is class preserving, for each $h_i^{p,q}$ there exits $g_i^{p,q} \in G_n^{(m)}$ such that $\varphi(h_i^{p,q}) = (h_i^{p,q})^{g_i^{p,q}}$. Let $g_i^{p,q} = t_{2,1}(\alpha_{i,1}^{p,q}) \, t_{3,2}(\alpha_{i,2}^{p,q}) \, \cdots \, t_{n,n-1}(\alpha_{i,n-1}^{p,q})$, where $1 \le i \le n-3$ and $0 \le p,\, q \le m-1$.

The conjugation action of $g_i^{p,q}$ on $h_i^{p,q}$ is given by
\begin{eqnarray*}
(h_1^{p,q})^{g_1^{p,q}} & = & t_{2, 1}(\theta^p)  t_{4, 3} (\theta^q) \, t_{3, 1} (-\alpha_{1,2}^{p,q}\, \theta^p)  \, t_{4,2} (\alpha_{1,2}^{p,q} \, \theta^q) \, t_{5, 3} (-\alpha_{1,4}^{p,q} \, \theta^q),
\end{eqnarray*}
\begin{eqnarray*}
(h_i^{p,q})^{g_i^{p,q}} & = & t_{i+1, i}(\theta^p)  \, t_{i+3, i+2}  \,(\theta^q)  \, t_{i+1, i-1}(\alpha_{i,i-1}^{p,q} \,\theta^p) \cr & & \,\cdot  t_{i+2, i} (-\alpha_{i,1+1}^{p,q} \, \theta^p) \,  t_{i+3,i+1} (\alpha_{i,1+1}^{p,q} \, \theta^q) \, t_{i+4, i+2} (-\alpha_{i,i+3}^{p,q} \, \theta^q)
\end{eqnarray*}
for $1< i < n-3$, and
\begin{eqnarray*}
(h_{n-3}^{p,q})^{g_{n-3}^{p,q}} & = & t_{n-2, n-3}(\theta^p)  \, t_{n, n-1} (\theta^q)  \, t_{n-2, n-4}  (\alpha_{n-3,n-4}^{p,q}  \,\theta^p) \cr & & \,\cdot t_{n-1, n-3} (- \alpha_{n-3,n-2}^{p,q} \, \theta^p) \, t_{n, n-2} (\alpha_{n-3,n-2}^{p,q}  \,\theta^q). \cr
\end{eqnarray*}

Since $\varphi(h_i^{p,q}) = (h_i^{p,q})^{g_i^{p,q}}$, by comparing the corresponding entries of the matrices we get
\begin{equation}
y_i^{(p)} \theta^{-p} = x_{i+2}^{(q)}\, \theta^{-q} , \label{eqnxy}
\end{equation}
where $p,q \in \{ 0, 1, \ldots , m-1 \}$ and $i=1, \ldots, n-3$. We remark that this set of equations doesn't contain parameters $x_2^{(k)}$ and $y_{n-2}^{(k)}$ for $k=0, \ldots, m-1$. Putting  $q=0$ in (\ref{eqnxy})  we get
\begin{equation}\label{eq1}
y_i^{(p)} = x_{i+2}^{(0)} \, \theta^p,
\end{equation}
where $p=0, \ldots, m-1$ and $i=1, \ldots, n-3$. Now putting $p=0$ in (\ref{eqnxy})  we get
$$
y_i^{(0)} = x_{i+2}^{(q)} \, \theta^{-q},
$$
which, along with \eqref{eq1} for $p=0$, gives the following set of equations
\begin{equation}\label{eq2}
x_{i+2}^{(q)} = x_{i+2}^{(0)} \, \theta^q,
\end{equation}
where $q=1, \ldots, m-1$ and $i=1, \ldots, n-3$. Thus it follows from \eqref{eq1}, \eqref{eq2} and the above remark, that $\varphi$ depends on the following $2m+n-3$ parameters only:
$$
x_{i+2}^{(0)}, \quad i=1, \ldots, n-3, \qquad x_2^{(k)}, \quad y_{n-2}^{(k)}, \quad
k=0, \ldots, m-1.
$$
Hence
$$
| \Aut_c(G_n^{(m)}) | \leqslant p^{2m^2 + mn - 3m},
$$
since each of these parameters can have at most $p^m$ values in $\mathbb F_{p^m}$.

Now we proceed to show that $p^{2m^2 + mn - 3m} \le | \Aut_c(G_n^{(m)}) |$. Fix $k$ and $\ell$ belonging to $\{ 0, 1, \ldots, m-1\}$. Suppose
$$
\phi^{k,\ell} (t_{3,2} (\theta^k)) = (t_{3,2} (\theta^k))^{\displaystyle t_{2,1} (\theta^{\ell})} = t_{3,2} (\theta^k) \, [ t_{3,2} (\theta^k), t_{3,1} (\theta^{\ell}) ] = t_{3,2} (\theta^k) \, t_{3,1} (\theta^{k+\ell})
$$
and $\phi^{k,\ell} (t_{i+1,i}(\theta^j)) = t_{i+1,i}(\theta^j)$ if $(i,j) \neq (2,k)$. It is easy to see that $\phi^{k,\ell}$ is a basis conjugating automorphism. Notice that $\phi^{k,\ell}$ induces a basis conjugating automorphism on the subgroup consisting of $3 \times 3$ top left blocks of all matrices of $G_n^{(m)}$ and it fixes every element outside this subgroup. Hence, by Lemma \ref{lemma_n=3},  $\phi^{k,\ell}$ is class preserving. Let $A_1$ denote the subgroup of $\Aut(G_n^{(m)})$ generated by $\{\phi^{k, \ell} \mid 0 \le k, \ell \le m-1\}$. Thus we get $p^{m^2}$ class preserving automorphisms of $G_n^{(m)}$.

Again fix $k$ and $\ell$ belonging to $\{ 0, 1, \ldots, m-1\}$. Suppose
$$
\psi^{k,\ell} (t_{n-1,n-2} (\theta^k)) =  (t_{n-1,n-2} (\theta^k))^{\displaystyle t_{n,n-1} (\theta^{\ell})} =  t_{n-1,n-2} (\theta^k) \, t_{n,n-2} (- \theta^{k+\ell})
$$
and $\psi^{k,\ell} (t_{i+1,i}(\theta^j)) = t_{i+1,i}(\theta^j)$ if $(i,j) \neq (n-2,k)$. It is again easy to see that $\psi^{k,\ell}$ is basis conjugating automorphism. Notice that $\psi^{k,\ell}$ induces a basis conjugating automorphism on the subgroup consisting of $3 \times 3$ right bottom blocks of all matrices of $G_n^{(m)}$ and it fixes every element outside this subgroup. Hence, by Lemma \ref{lemma_n=3}, $\psi^{k,\ell}$ is class preserving. Let $A_2$ denote the subgroup of $\Aut(G_n^{(m)})$ generated by $\{\psi^{k, \ell} \mid 0 \le k, \ell \le m-1\}$. Thus we again get $p^{m^2}$ class preserving automorphisms of $G_n^{(m)}$. Notice that $A_1$ and $A_2$ intersect trivially.

Now, fix $j \in \{ 2, \ldots, n-2 \}$ and $\ell \in \{ 0, \ldots, m-1\}$. Then conjugations
\begin{eqnarray*}
f^{\ell}_j (t_{i+1, i} (\theta^k)) & = & (t_{i+1,i} (\theta^k))^{\displaystyle t_{j+1,j}\, (\theta^{\ell})}
\end{eqnarray*}
give $p^{(n-3)m}$ inner automorphisms.

Since the set of these inner automorphisms intersects trivially with $A_1$ and $A_2$, it follows that
$$
|\Aut_c(G_n^{(m)}) | \ge p^{2m^2 + m(n - 3)}.
$$

Hence $|\Aut_c(G_n^{(m)})| = p^{2m^2 + mn - 3m}$. Since $|\Inn(G_n^{(m)})| = p^{m(n-1)}$, we have
$$
|\Out_c(G_n^{(m)})| = p^{2m(m - 1)}.
$$
Since the exponent of $\gamma_2(G_n^{(m)})$ is $p$, it follows that $\Aut_c(G_n^{(m)})$ is an elementary abelian group, and therefore $\Out_c(G_n^{(m)})$ is also an elementary abelian group. This completes the proof of the theorem.
\end{proof}

\section{Open problems on normal automorphisms}

In this section we mainly talk about the group of normal automorphisms of $G$, which consists of automorphisms of $G$ fixing all normal subgroups of~it.

Normal automorphisms for various classes of groups were investigated intensively. We recall some of the results obtained in this direction.

By \cite{Lub, Lue} if $G$ is free non-abelian group, then  each of its normal automorphism is inner. Thus
$$
\Inn(G) = \Aut_c(G) = \Aut_n(G).
$$

Let $P_n$, $n \geqslant 3$, be the pure braid group. It was proved by Nesh\-cha\-dim \cite{Ne-2}, that any normal automorphism of $P_n$ is inner. It is well-known that $P_n$ is a semidirect product of certain free groups.

\begin{problem}
Let $G = F_k \leftthreetimes F_{\ell}$, where $k, \ell \geqslant 2$, be a semidirect product of free non-abelian groups. Is it true that every normal automorphism of $G$ is inner? If it is not true in general, then is it true in the case when $F_{\ell}$ acts on $F_k$ by identity modulo the derived subgroup?
\end{problem}

Romankov \cite{R} proved that if $G$ is a free  non-abelian solvable group, then $\Aut_n(G) = \Inn( G)$. Normal automorphisms of pro-finite groups were studied in~\cite{J}. Normal automorphisms of free two-step solvable pro-$p$-groups were investigated by Romanovskii and Boluts' in \cite{RB}. They described the group of normal automorphisms and proved that it is bigger than the group of inner automorphisms.  Romanovskii \cite{Rom} proved that  every normal automorphism of a free solvable pro-p-group of solvability step $\geq 3$ is inner.  It was shown by Jarden and Ritter \cite{JR} that all automorphisms of absolute Galois group $G(\mathbb{Q})$ are normal and all normal automorphisms are inner.

\begin{problem}
Describe the quotient $\Aut_n(G) / \Inn(G)$ for $G$ such that \linebreak 
$\Aut_n(G) \neq \Inn(G)$, and the quotient $\Aut_c(G) / \Inn(G)$ for
$G$ such that $\Aut_c(G) \neq \Inn(G)$.
\end{problem}
The following facts are known \cite{F-G}. If $G$ is nilpotent, then $\Aut_n(G)$ is nilpotent-by-abelian. If $G$ is polycyclic, then $\Aut_n(G)$ is polycyclic. In particular, if $G$ is finite solvable, then so is $\Aut_n(G)$.

\begin{problem}[Endimioni \cite{En1}]
Does the last statement hold true without finiteness hypothesis?
\end{problem}

Endimioni \cite{En1} proved that the group of all normal automorphisms of a metabelian group is solvable of length $\leqslant 3$. Moreover, he gave an example showing that this statement can not be improved. Indeed,  $\Aut_n(A_4) = \Aut(A_4) = S_4$, where $A_4$ is metabelian and $S_4$ is solvable of length $3$.

It was proven by Robinson \cite{Ro} that for any finite group $F$, there exists a finite semi-simple group $G$ such that $\Aut_n(G) /  \Inn(G)$ contains $F$ as a subgroup.

\begin{problem}
Does the last statement hold true without finiteness hypothesis?
\end{problem}

The following example shows that there exists an abelian group $G$ such that $\Aut_n(G) /$ $ \Aut_c(G) \neq 1$.

\begin{example}
{\rm Let $G = C = \langle a \rangle$ be an infinite cyclic group. Obviously, $\Inn(C) = \Aut_c(C)  = 1$, and automorphism $\varphi : a \longmapsto a^{-1}$ belongs to $\Aut_n(C)$. Each subgroup of $C$ is normal and $\varphi$ sends each of its subgroup to itself. Therefore, $\Aut_n(C) / \Aut_c(C) = \Aut(C) \simeq C_2$ is the cyclic group of order two. }
\end{example}

The following example shows that there exists a non-abelian group $G$ such that $\Aut_n(G) /$ $ \Aut_c(G) \neq 1$.

\begin{example} \label{example1.2} {\rm Consider $G  = \langle x, y ~|~ x^2 = y^2 \rangle$. It was pointed out by Neshchadim \cite{Ne-2} that the automorphism
$$
\varphi :
\left\{%
\begin{array}{l}
x \longmapsto x^{-1}, \\
y \longmapsto y^{-1},\\
\end{array}%
\right.
$$
is normal, but not inner. Let us show that this  automorphism is not class preserving. Indeed, $\varphi$ send central element $x^2$ to $x^{-2}$. Obviously, $x^{-2}$ and $x^2$ are not conjugate, because $g^{-1} x^2 g = x^2$ for any $g \in G$. Therefore, $\Aut_n(G) / \Aut_c(G) \neq 1$. }
\end{example}

For $p, q \in \mathbb{Z}$, define the group
$$
G(p, q) = \langle x, y ~|~ x^p = y^q \rangle .
$$
If $p>0$ is odd and $0<q<p$ with $(p,q)=1$, then $G(p,q)$ is a
fundamental group of $(p,q)$-torus knot complement in the
$3$-sphere. Moreover, $G(3,2) \simeq B_3$, where $B_n$ is the braid
group on $n$ strands.  Neshchadim \cite{Ne-1} proved that
 every normal automorphism  of $B_n$ is inner. As we mentioned in
Example~\ref{example1.2}, $\Aut_n(G(2, 2)) \neq \Inn(G(2, 2))$ and
$\Aut_n(G(2, 2)) \neq \Aut_c(G(2, 2))$.

\begin{problem}
Describe $p$ and $q$ such that $\Aut_c( G(p, q)) \neq \Inn(G(p,
q))$. Analogously, describe $p$ and $q$ such that $\Aut_n(G(p, q))
\neq \Inn(G(p, q))$.
\end{problem}

The following problem is regarding knot groups.

\begin{problem}
Do normal (or class preserving) automorphisms of a knot group are all inner? This is true for the trefoil knot group $G(3,2)$.
\end{problem}

It is known that for free nilpotent group of class $2$, every automorphism is tame (an automorphism is said to be \emph{tame} if it is induced by an automorphism of a free group under the natural homomorphism). It was proved by Neshchadim \cite{N1} that every tame normal automorphism of $F_n / \gamma_4 (F_n)$, where $n \geqslant 2$, is inner. Moreover, he gave a complete description, in terms of generators and relations, of the group of normal automorphisms of a free nilpotent group of class $4$ with arbitrary number of generators.

\begin{problem}
What can one say about tame normal automorphisms of \linebreak  $F_n / \gamma_m(F_n)$ if $n \geqslant 2$ and $m \geqslant 5$?
\end{problem}

\appendix

\section{Alternative proof of Theorem \ref{theorem-new}}

The following proof of our Theorem \ref{theorem-new} is suggested by
N. S.~Romanovskii.

\noindent{\bf Alternative proof of Theorem \ref{theorem-new}.}  First we show that all class preserving automorphisms of $\Gamma_{n, n} \cong \mathrm{UT}_n(K)$ are inner.
Since the group  $\mathrm{UT}_2(K)$ is abelian,  therefore $\Aut_c(\mathrm{UT}_2(K)) = \Inn(\mathrm{UT}_2(K)) = 1$.  By inductive argument, suppose that $\Aut_c(\mathrm{UT}_{n-1}$ $(K)) = \Inn(\mathrm{UT}_{n-1}(K)) $ for some integer $n \ge 3$. Let $G = \mathrm{UT}_n(K)$. Define two subgroups $A$ and $B$ of $G$ as follows.
$$A = \{(a_{ij}) \in G \mid a_{ij} = 0 ~\mbox{for all}~~i \le n-1\},$$
and
$$B = \{(b_{ij}) \in G \mid b_{nj} = 0 \}.$$
Notice that $A$ is  an abelian normal subgroup of $G$ generated by subgroups $t_{n,j} (K)$, where $1 \leq j \leq n - 1$, and $B$ is generated by subgroups $t_{i+1,i} (K)$, where $1 \leq i \leq n-2$. The group $G$  is a semidirect product of $A$ and $B$. Thus $B \cong G/A$ and every $g \in G$ can be presented as $g = b a$, where $b \in B$, $a \in A$.

Let $\varphi \in \Aut_c(G)$.  Consider the action of $\varphi$ on the generators of $B$:
$$
\varphi (t_{i, i-1}) = t_{i,i-1}^{b_{i-1} a_{i-1}}, \quad i=2, \ldots, n-1
$$
where $b_{i-1} \in B$, $a_{i-1} \in A$, and $t_{i,i-1} = t_{i,i-1} (1)$. Then $\varphi$ induces a class preserving automorphism $\overline{\varphi}$ of $B$, where
$$
\overline{\varphi} (t_{i, i-1}) = t_{i,i-1}^{b_{i-1}}, \quad i=2, \ldots, n-1.
$$
Since $G/A \cong B \cong \mathrm{UT}_{n-1}(K)$ and  $\Aut_c(\mathrm{UT}_{n-1}(K)) = \Inn(\mathrm{UT}_{n-1}(K)) $, it follows that $\overline{\varphi} \in \Inn(B)$. Then there exists an element $b \in B$ such that $\overline{\varphi} $ is the inner automorphism of $B$  induced by $b$ under conjugation. Denote by $f_{b}$ the inner automorphism of $G$  induced by $b\in B$ under conjugation. Notice that $f_b^{-1} = f_{b^{-1}}$. Consider the action of  $f_b^{-1}\varphi$ on the generators of the group $B$:
$$
f^{-1}_{b} \varphi (t_{i+1,i}) = f^{-1}_{b} (\varphi (t_{i+1,i})) = \left( t_{i+1, i}^{b_{i} a_{i}} \right)^{b^{-1}} = t_{i+1,i}^{b b^{-1} a_{i } [ a_{i}, b ] } = t_{i+1,i}^{{\bar{a}_{i}}} ,
$$
where $\bar{a}_{i}   = a_{i}  [a_{i}, b_{i}]$. Let us represent
$$
\bar{a}_{i} = t_{n,1} (\alpha_{i,1}) t_{n,2} (\alpha_{i,2}) \ldots t_{n,n-1} (\alpha_{i,n-1}) ,
$$
for some $\alpha_{i,j} \in K$. Using the commutator identity, one can easy check that
$$
t_{t+1,i}^{\bar{a}_{i}} = t_{i+1, i}^{ t_{n,1} (\alpha_{i,1}) t_{n,2} (\alpha_{i,2}) \ldots t_{n,n-1} (\alpha_{i,n-1})  }
 = t_{i+1,i}^{t_{n,i+1}(\alpha_{i,i+1})} = t_{i+1,i} \, t_{n,i}(-\alpha_{i,i+1}).
$$
Consider the following element of $A$:
$$
a =  t_{n,2} (\alpha_{1,2}) t_{n,3} (\alpha_{2,3}) \ldots t_{n,n-1} (\alpha_{n-2,n-1}) .
$$
Then for every $i=1,2, \ldots, n-2$ the following equality holds:
$$
t_{i+1,i}^{\bar{a}_{i}} = t_{i+1,i} ^{a} .
$$
Hence the automorphism $\psi = f_{a}^{-1} f_{b}^{-1} \varphi$ of $G$ fixes all generators of $B$.

Consider the action of $\psi$ on $t_{n,n-1} \in A$. Since $\psi$  is class preserving, it acts as follows:
$$
\psi (t_{n,n-1})  = t_{n,n-1}^{b_{n-1} a_{n-1}} ,  \qquad b_{n-1} \in B, a_{n-1} \in A .
$$
Since $A$ is an abelian normal subgroup of $G$, it follows that  $t_{n,n-1}^{b_{n-1} a_{n-1}}  =  t_{n,n-1}^{b_{n-1}} $. Presenting
$$
t_{n,n-1}^{b_{n-1}} = t_{n,n-1} [t_{n,n-1}, b_{n-1}]
$$
and using the fact $[t_{n,n-1}, b_{n-1}] \in A$, we can write
$$
t_{n,n-1}^{b_{n-1}} = t_{n,n-1} t_{n,1} (\beta_{1}) t_{n,2} (\beta_{2})  \ldots t_{n,n-2} (\beta_{n-2}) .
$$
It follows that $\beta_{2}  = \beta_{3}  = \ldots = \beta_{n-2} = 0$. Indeed, for any $2 \le k \le n-2$, we have
 \begin{eqnarray*}
1 &=& \psi([t_{n, n-1}, t_{k,k-1}]) = [  t_{n,n-1} t_{n,1} (\beta_{1}) t_{n,2} (\beta_{2}) \ldots t_{n,n-2} (\beta_{n-2}), t_{k,k-1}] \cr
& = & [t_{n,n-1} t_{n,1} (\beta_{1})  \ldots t_{n,k-1}(\beta_{k-1}) t_{n,k+1}(\beta_{k+1}) \ldots t_{n,n-2} (\beta_{n-2}), t_{k,k-1}]^{t_{n,k}(\beta_{k})}\cr
& & [t_{n,k}(\beta_{k}), t_{k,k-1}] \cr
& = &  t_{n,k-1} (\beta_{k}).
\end{eqnarray*}
Hence $\beta_{2} = 0$. Thus
$$
\psi(t_{n,n-1})  = t_{n,n-1} t_{n,1} (\beta_{1})  = t_{n,n-1}^{c},
$$
where $c = t_{n-1,1} (\beta_{1})$. Therefore, $f_{c}^{-1} \psi $ fixes $ t_{n,n-1}$.  Moreover, $c$ commutes with all generators of $B$,  since $c \in Z(B)$. Hence  $f_{c}^{-1} \psi = id_{G}$. Thus, we proved that $\varphi$ is a product of inner automorphisms of $G$. So $\varphi$ itself is an inner automorphism of $G$ and therefore  $\Aut_c(\Gamma_{n, n}) = \Inn(\Gamma_{n, n})$.

Now we proceed to prove the general case, i.e., $\Aut_c(\Gamma_{n, k}) = \Inn(\Gamma_{n, k})$, $1 \le k \le n$. Since $\Aut_c(\Gamma_{n ,n}) = \Inn(\Gamma_{n, n})$ for all $n$ (as we proved above) and $\Gamma_{3, k}$ is abelian for $1 \le k \le 3$, by inductive argument we can assume that $\Aut_c(\Gamma_{m, k}) = \Inn(\Gamma_{m, k})$ for all $m < n$ and $1 \le k \le m$. So assume that $n \ge 4$ and $3 \le k < n$. Let $A'$ denote the subgroup of $\Gamma_{n, k}$ generated by the subgroups $t_{n, j}(K)$, where $n-k+1 \le j \le n-1$ and $B'$ denote the subgroup generated by the subgroups $t_{i+1, i}(K)$, where $1 \le i \le n-2$. Here we denoted by $t_{i,j} (\alpha)$ the image of element $t_{i,j} (\alpha) \in G$ under the natural homomorphism $G \to \Gamma_{n,k}$.  Notice that  $A'$ is an abelian normal subgroup of $\Gamma_{n, k}$ and $B' \cong \Gamma_{n, k}/A' \cong \Gamma_{n-1, k}$.

Let $\varphi \in \Aut_c(\Gamma_{n, k})$. Then using arguments similar to the case $\Gamma_{n, n}$, we can find an automorphism $\psi$ of $\Gamma_{n, k}$ which fixes each element of $B'$. After that we can prove that $\psi$ is an inner automorphism of $\Gamma_{n,k}$. Hence $\varphi$ is an inner automorphism.

\end{document}